\numberwithin{equation}{section}
\newtheorem{theorem}{Theorem}[section]
\newtheorem{proposition}[theorem]{Proposition}
\newtheorem{lemma}[theorem]{Lemma}
\newtheorem{corollary}[theorem]{Corollary}
\theoremstyle{definition}
\newtheorem{definition}[theorem]{Definition}
\newtheorem{example}[theorem]{Example}
\theoremstyle{remark}
\newtheorem{remark}[theorem]{Remark}
\newcommand{\R}{\mathbb{R}}
\newcommand{\Z}{\mathcal{Z}}
\title{Decompositions of suspensions of spaces involving polyhedral products}
\author{Kouyemon Iriye}
\address{Department of Mathematics and Information Sciences, Osaka Prefecture University, Sakai, 599-8531, Japan}
\email{kiriye@mi.s.osakafu-u.ac.jp}
\author{Daisuke Kishimoto}
\address{Department of Mathematics, Kyoto University, Kyoto, 606-8502, Japan}
\email{kishi@math.kyoto-u.ac.jp}
\thanks{K.I. is supported by JSPS KAKENHI (No. 26400094), and D.K. is supported by JSPS KAKENHI (No. 25400087)}
\date{\today}
\subjclass[2010]{}
\keywords{}
\begin{document}

\maketitle

\begin{abstract}
Two homotopy decompositions of supensions of spaces involving polyhedral products are given. The first decomposition is motivated by the decomposition of suspensions of polyhedral products in \cite{BBCG}, and is a generalization of the retractile argument of James \cite{J}. The second decomposition is on the union of an arrangement of subspaces called diagonal subspaces, and generalizes the result in \cite{La}.
\end{abstract}

\baselineskip 16pt

\section{Introduction}

A space which is now called a polyhedral product is constructed from a collection of pairs of spaces in accordance with the combinatorial information of a given abstract simplicial complex, where the collection is labeled by vertices of the given simplicial complex. By definition polyhedral products are related to fundamental objects in combinatorics, geometry, and topology such as Stanley-Reisner rings and their derived algebras, graph products of groups (e.g. right-angled Artin and Coxeter groups), quasitoric manifolds, coordinate subspace arrangements, higher order Whitehead products, and so on. The aim of this paper is to provide two kinds of homotopy decompositions of suspensions of spaces involving polyhedral products: one is a generalization of the decompositions of \cite{BBCG} and \cite{ABBCG}, and the other is a decomposition of the union of arrangements of special subspaces called diagonal subspaces which include polyhedral products as subspaces. We briefly explain the backgrounds of these decompositions.

An important property of polyhedral products is the existence of retractions onto certain ``sub''polyhedral products, where this kind of retraction property also appears in other contexts \cite{AC,ACG,ACTG,KT}. By using this retraction property, Bahri, Bendersky, Cohen, and Gitler \cite{BBCG} gave a decomposition of suspensions of polyhedral products, and we aim at generalizing this decomposition. It is actually obtained by the retractile argument due to James \cite{J} which provides a decomposition of suspensions of spaces satisfying a certain retraction property, and we will generalize the retractile argument which is the first decomposition. Our decomposition has a naturality which cannot be obtained by the retractile argument, and recovers, of course, a decomposition of suspensions of polyhedral products by Bahri, Bendersky, Cohen, and Gitler \cite{BBCG} and also the decomposition of suspensions of simplicial spaces by Adem, Bahri, Bendersky, Cohen, and Gitler \cite{ABBCG}. We here note that in \cite{ABBCG} it is pointed out that the decomposition of suspensions of polyhedral products can be obtained from the decomposition of suspensions of simplicial spaces, but polyhedral products do not seem to fit to the context of simplicial spaces. 

The second space which we decompose is the union of an arrangement of special subspaces called diagonal subspaces which includes important subspace arrangements such as braid arrangements, where we abbreviate this union as the diagonal arrangement. The decomposition of a suspension of diagonal arrangements was formerly studied by Labassi \cite{La} in a special case, and Sadok Kallel posed a question whether the result of Labassi can be generalized to general diagonal arrangements under a certain dimensional condition imposed on the special case of Labassi. We give an affirmative answer to this question which is our second decomposition. These diagonal arrangements include special polyhedral products as subspaces, though in general we cannot describe properties of these polyhedral products as subspaces of the diagonal arrangements, i.e. properties of the inclusion. But under a certain dimensional condition, we can describe properties of the inclusion which enable us to prove the decomposition.

The authors are grateful to the referees for useful advises and helpful comments, where they pointed out that it is sufficient to assume retractibility of $\Sigma X$ instead of $X$ in Theorem \ref{first-decomp} and showed a generalization mentioned in Remark \ref{generalization}.

\section{Retractile spaces over posets}

In this section we consider a space over a poset with natural retractions, and prove a decomposition of a suspension of its certain colimit. To explain what we are going to do, we start with a simple example. Consider the diagram
$$\xymatrix{X\ar[r]&X\times Y\\
\ast\ar[u]\ar[r]&Y\ar[u]}$$
of spaces. Then we see that every arrow has a retraction, and it induces a filtration 
$$*\subset X\vee Y\subset X\times Y.$$
By the above retractions, the filtration splits after a suspension to yield the decomposition
$$\Sigma(X\times Y)\simeq \Sigma(X\vee Y)\vee\Sigma(X\wedge Y)$$
which is natural with respect to $X$ and $Y$. The aim of this section is to generalize this situation. Let $P$ be a poset. We regard $P$ as a category by pointing upward, that is, for $p,q\in P$, $p\to q$ in the category means $p\le q$ in the poset. We assume two conditions on $P$:

\begin{enumerate}
\item $P$ is graded, i.e. $P=\coprod_{n\ge 0}P^n$ as sets and for $p\in P^n$ and $q\in P^m$, $p<q$ implies $n<m$.
\item $P$ is a lower semilattice, i.e. any $p,q\in P$ have the greatest lower bound $p\wedge q$.
\end{enumerate}

Let $X$ be a space over $P$ which is a functor from $P$ to the category of pointed topological spaces. A map between spaces over $P$ is a natural transformation as usual. The grading of $P$ defines a filtration
$$X^{\underline{0}}\subset X^{\underline{1}}\subset\cdots\subset X^{\underline{n}}\subset X^{\underline{n+1}}\subset\cdots,$$
where $X^{\underline{n}}=\mathrm{colim}\,X\vert_{P^\le n}$ for the restriction $X\vert_{P^\le n}$ of $X$ to the subcategory $P^{\le n}:=\coprod_{0\le k\le n}P^k$. We say that $X$ is $n$-cofibrant if the canonical map $X^{\underline{i}}\to X^{\underline{i+1}}$ is a cofibration for $i=0,\ldots,n-1$. There is a sufficient condition for the $n$-cofibrancy.

\begin{lemma}
[cf. \cite{Li}]
\label{cofibrant}
If all arrows of $X\vert_{P^{\le n}}$ are cofibrations, $X$ is $n$-cofibrant.
\end{lemma}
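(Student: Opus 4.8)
The plan is to peel off the filtration one grading at a time and reduce the statement, via the layer-by-layer description of the colimit, to the cofibrancy of a single ``latching'' map for each element of $P$. For $p\in P$ write $P_{<p}=\{q\in P\mid q<p\}$, $P_{\le p}=P_{<p}\cup\{p\}$, and $\partial X(p)=\mathrm{colim}\,X|_{P_{<p}}$. If $p\in P^{k}$, the grading forces $P_{<p}\subset P^{\le k-1}$, and every element of $P^{k}$ is maximal in $P^{\le k}$; so the standard computation of a colimit over a poset ``by layers'' exhibits $X^{\underline k}$ as the pushout of
$$X^{\underline{k-1}}\longleftarrow\coprod_{p\in P^{k}}\partial X(p)\longrightarrow\coprod_{p\in P^{k}}X(p).$$
Since cofibrations are stable under coproducts and cobase change, the map $X^{\underline{k-1}}\to X^{\underline k}$ is a cofibration once every latching map $\partial X(p)\to X(p)$ with $p\in P^{k}$ is, and hence (taking $k\le n$) the lemma reduces to the assertion that $\partial X(p)\to X(p)$ is a cofibration for all $p\in P^{\le n}$. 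As $p$ is terminal in $P_{\le p}$, we have $X(p)=\mathrm{colim}\,X|_{P_{\le p}}$, so this map is nothing but the top layer map of the diagram $X|_{P_{\le p}}$, which is again a functor on a graded lower semilattice whose arrows are all cofibrations; I would exploit this self-similarity by inducting on the grading of $p$.

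For a fixed $p$, to see that $\partial X(p)\to X(p)$ is a cofibration I would set $A_{q}=\mathrm{im}(X(q)\to X(p))$ for $q\le p$. Each $X(q)\to X(p)$ is a cofibration and hence a closed embedding, so $A_{q}$ is a closed copy of $X(q)$ in $X(p)$, and $X(p)=\bigcup_{q\le p}A_{q}$ with $\partial X(p)=\bigcup_{q<p}A_{q}$ (the base of the induction, $p$ minimal, being just the well-pointedness of $X(p)$). The meet-semilattice hypothesis should yield $A_{q}\cap A_{q'}=A_{q\wedge q'}$ for all $q,q'\le p$, and by iteration that every finite intersection of the $A_{q}$ is again some $A_{r}$; for incomparable $q,q'$ the grading moreover makes $\mathrm{gr}(q\wedge q')$ strictly smaller than $\mathrm{gr}(q),\mathrm{gr}(q')$. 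Thus $\{A_{q}\}_{q\le p}$ is a closed cover of $X(p)$ all of whose members and finite intersections are included in one another by cofibrations, so the union theorem for cofibrations \cite{Li} applies and gives that $\partial X(p)=\bigcup_{q<p}A_{q}\hookrightarrow X(p)$ is a cofibration, completing the induction.

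The step I expect to be the real obstacle is the identity $A_{q}\cap A_{q'}=A_{q\wedge q'}$ — only the inclusion $\supseteq$ is formal — and the point-set bookkeeping around it: that the colimit of a functor on a graded lower semilattice all of whose arrows are cofibrations is excisive, i.e. the structure maps are closed embeddings whose images meet exactly in the images of meets. This is the combinatorial content for which both hypotheses on $P$ are used (the grading, to organise the induction and to force meets to drop in grade; the meet-semilattice property, to pin the intersections down) and is precisely the point for which \cite{Li} is cited. Granting it, the remaining ingredients — the layer-wise pushout description and the closure of cofibrations under coproducts and cobase change — are routine.
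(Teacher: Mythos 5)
Your reduction --- peel the filtration one grade at a time via the latching pushout
$X^{\underline{k}}=X^{\underline{k-1}}\cup_{\coprod_{p\in P^k}\mathrm{colim}\,X\vert_{P_{<p}}}\coprod_{p\in P^k}X_p$,
then handle each latching map by Lillig's union theorem applied to the images $A_q=\mathrm{im}(X_q\to X_p)$ --- is exactly the route the citation to \cite{Li} intends, and the paper itself offers nothing beyond that citation. But the step you yourself flag as ``the real obstacle'' is a genuine gap, not point-set bookkeeping: the identity $A_q\cap A_{q'}=A_{q\wedge q'}$, and with it the identification of $\mathrm{colim}\,X\vert_{P_{<p}}$ with the union $\bigcup_{q<p}A_q$, does \emph{not} follow from the hypotheses that $P$ is a graded lower semilattice and all arrows are cofibrations. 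Take $P=\{0,a,b,1\}$ with $0<a<1$, $0<b<1$ and $a,b$ incomparable (so $a\wedge b=0$), and the diagram $X_0=\{0\}$, $X_a=X_b=X_1=[0,1]$, with $X_0\to X_a$, $X_0\to X_b$ the basepoint inclusions and $X_a\to X_1$, $X_b\to X_1$ both the identity. Every arrow is a cofibration, yet inside $X_1$ one has $A_a\cap A_b=[0,1]\neq\{0\}=A_{a\wedge b}$, the latching map $[0,1]\vee[0,1]\to[0,1]$ is the fold map, and the canonical map $X^{\underline{1}}\to X^{\underline{2}}$ is this non-injective fold map, hence not a cofibration. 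So the statement you are trying to prove is false as literally written, and no argument can close the gap without an extra hypothesis.

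What is missing is an excision condition: one must assume --- as holds in every application in the paper, where the $X_p$ are honest subspaces of an ambient space (e.g.\ $\Z_{K_I}(\underline{X}_I,\underline{A}_I)\subset X_1\times\cdots\times X_m$, or the subspaces $s_{i_1}\cdots s_{i_k}(X_{n-k})\subset X_n$) and meets in $P$ are realized by intersections --- that the arrows are closed embeddings with $X_q\cap X_{q'}=X_{q\wedge q'}$ and that these intersections include by cofibrations. Under that hypothesis your argument is correct and complete: the colimits become unions with the correct topology, Lillig's theorem applies to the closed cover $\{A_q\}_{q<p}$, and stability of cofibrations under coproducts and cobase change finishes the induction on the grade. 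So the right fix is to build the excision condition into the statement (or verify it in each application), rather than to try to derive $A_q\cap A_{q'}=A_{q\wedge q'}$ from the semilattice structure alone. Two minor further points: your use of $X(p)$ for $X_p$ collides with the paper's notation $X(p)=X_p/\mathrm{colim}\,X\vert_{P_{<p}}$, and for $p$ with $P_{<p}=\emptyset$ the latching map is $*\to X_p$, so well-pointedness of the $X_p$ is also being used tacitly in your base case.
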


We now define natural retractions in the diagram $X$, and state the main result of this section.

\begin{definition}
We say that $X$ is retractile if every arrow $\iota_{q,p}\colon X_p\to X_q$ admits a retraction $\rho_{p,q}$ satisfying
$$\rho_{p,r}\circ\iota_{r,q}=\rho_{p,q}\quad\text{and}\quad\rho_{p,r}=\rho_{p,q}\circ\rho_{q,r}\quad\text{for}\quad p<q<r.$$
\end{definition}

Let $X,Y$ be retractile spaces over $P$. We say that a map $f\colon X\to Y$ of spaces over $P$ preserves retractions if $\rho^Y_{p,q}\circ f_q=f_p\circ\rho_{p,q}^X$ for any $p<q\in P$, where $\rho_{p,q}^X$ and $\rho_{p,q}^Y$ are the retractions of $X$ and $Y$, respectively. Put $X(p):=X_p/\mathrm{colim}\,X\vert_{P_{<p}}$ for $p\in P$, where $P_{<p}:=\{q\in P\,\vert\,q<p\}$.

\begin{theorem}
\label{first-decomp}
Let $X$ be a space over a graded lower semilattice $P$. If $X$ is $n$-cofibrant and $\Sigma X$ is retractile, then there is a homotopy equivalence
$$\Sigma X^{\underline{n}}\simeq\Sigma\bigvee_{p\in P^{\le n}}X(p)$$
which is natural with respect to maps of spaces over $P$ preserving retractions.
\end{theorem}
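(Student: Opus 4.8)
The strategy is to prove the decomposition by induction on $n$, splitting off the top stratum $\Sigma X^{\underline n}/\Sigma X^{\underline{n-1}}$ using the retractions, and then identifying that quotient with a wedge of the $X(p)$ for $p\in P^n$. Let me sketch the steps.

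First I would set up the inductive skeleton. The base case $n=0$ is essentially trivial: $P^0$ consists of minimal elements, $X^{\underline 0}=\mathrm{colim}\,X|_{P^0}$, and since distinct elements of a graded lower semilattice that both lie in $P^0$ must be incomparable (hence their meet is some element of lower grade — but there is none, so actually one must be careful; in fact the colimit over $P^0$ is a wedge $\bigvee_{p\in P^0}X_p$ because there are no nonidentity arrows, and $X(p)=X_p$ for $p\in P^0$). For the inductive step, assume $\Sigma X^{\underline{n-1}}\simeq\Sigma\bigvee_{p\in P^{\le n-1}}X(p)$ naturally. Since $X$ is $n$-cofibrant, $X^{\underline{n-1}}\hookrightarrow X^{\underline n}$ is a cofibration, so there is a cofiber sequence $X^{\underline{n-1}}\to X^{\underline n}\to X^{\underline n}/X^{\underline{n-1}}$, and after suspension it suffices to produce a natural retraction $r\colon\Sigma X^{\underline n}\to\Sigma X^{\underline{n-1}}$; then $\Sigma X^{\underline n}\simeq\Sigma X^{\underline{n-1}}\vee\Sigma(X^{\underline n}/X^{\underline{n-1}})$, and combined with the inductive hypothesis and an identification $X^{\underline n}/X^{\underline{n-1}}\simeq\bigvee_{p\in P^n}X(p)$ this finishes the induction.

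The retraction $r$ is where the retractile hypothesis and the lower semilattice condition do the work. The idea is that $X^{\underline n}$ is glued from the $X_p$, $p\in P^{\le n}$, along the arrows $\iota$, and the compatible family of retractions $\rho^{\Sigma X}_{p,q}$ of $\Sigma X$ should assemble, via the semilattice structure, into a single map $\Sigma X^{\underline n}\to\Sigma X^{\underline{n-1}}$. Concretely, I would build $r$ on each piece $\Sigma X_p$: if $p\in P^{\le n-1}$ use the identity into $\Sigma X^{\underline{n-1}}$; if $p\in P^n$ and $p$ is not minimal, the elements of $P_{<p}$ have a maximum among those of grade $n-1$? — not in general, so instead one uses that for $q,q'<p$ the meet $q\wedge q'<p$ still lies below $p$, and the retraction compatibility $\rho_{p,r}=\rho_{p,q}\circ\rho_{q,r}$ guarantees the partially-defined retractions onto the various $X_q$, $q<p$, are consistent, so they glue to a well-defined map $\Sigma X_p\to\mathrm{colim}\,\Sigma X|_{P_{<p}}\hookrightarrow\Sigma X^{\underline{n-1}}$ (this is where $\Sigma$, not $X$, must be retractile: the glued map need only exist after suspension). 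Checking that these piecewise maps agree on overlaps — i.e. on $\Sigma X_q$ for $q<p$ — uses the first identity $\rho_{p,r}\circ\iota_{r,q}=\rho_{p,q}$, and checking compatibility across two different top elements $p,p'\in P^n$ uses the behavior on $X_{p\wedge p'}$, again via the semilattice meet. Naturality of $r$ is immediate from the hypothesis that maps preserve retractions.

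The last ingredient is the identification $X^{\underline n}/X^{\underline{n-1}}\simeq\bigvee_{p\in P^n}X(p)$. Here $X^{\underline n}$ is obtained from $X^{\underline{n-1}}$ by attaching, for each $p\in P^n$, the space $X_p$ along the map $\mathrm{colim}\,X|_{P_{<p}}\to X^{\underline{n-1}}$; since distinct $p,p'\in P^n$ are incomparable, these attachments happen ``in parallel,'' so the cofiber collapses each $X_p$ onto $X_p/\mathrm{colim}\,X|_{P_{<p}}=X(p)$ and wedges the results — a colimit-of-pushouts argument, using $n$-cofibrancy (or Lemma~\ref{cofibrant}) to ensure the relevant maps are cofibrations and the pointset colimit computes the homotopy colimit.

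The main obstacle I anticipate is the careful construction and well-definedness of the retraction $r$: one must organize the family $\{\rho_{p,q}\}$ over the poset $P_{<p}$ into a single map out of $X_p$ using only the lower-semilattice (not full lattice) structure, verify coherence on all overlaps simultaneously, and keep track of exactly where the passage to suspensions is needed. The cofibration bookkeeping (to commute the relevant (co)limits past $\Sigma$ and to justify that the pointset quotients model the homotopy cofibers) is routine but needs Lemma~\ref{cofibrant} and the $n$-cofibrancy hypothesis invoked at the right places.
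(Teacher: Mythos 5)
Your overall skeleton --- induct on $n$, use Lemma~\ref{split} to split off the top filtration quotient once a left homotopy inverse of $\Sigma X^{\underline{n-1}}\to\Sigma X^{\underline n}$ is in hand, and identify $X^{\underline n}/X^{\underline{n-1}}$ with $\bigvee_{p\in P^n}X(p)$ --- matches the paper's. But the step you yourself flag as the main obstacle, the construction of the retraction $r\colon\Sigma X^{\underline n}\to\Sigma X^{\underline{n-1}}$, is where your plan has a genuine gap, and the mechanism you propose does not work. You want to send $\Sigma X_p$ for $p\in P^n$ into $\mathrm{colim}\,\Sigma X\vert_{P_{<p}}$ by ``gluing'' the compatible family of retractions $\rho_{q,p}\colon X_p\to X_q$, $q<p$. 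A family of maps \emph{into} the objects of a diagram, compatible with its arrows, defines a map to the \emph{limit}, not the colimit; there is nothing to glue, since the targets are different spaces. Already for $P=2^{[2]}$ with $X_I=\prod_{i\in I}X_i$ this fails: the coordinate projections $X_1\times X_2\to X_i$ do not assemble into a map $X_1\times X_2\to X_1\vee X_2=\mathrm{colim}\,X\vert_{P_{<\{1,2\}}}$ by any gluing; the retraction $\Sigma(X_1\times X_2)\to\Sigma(X_1\vee X_2)$ exists only as the \emph{sum} of the suspended projections using the co-H structure of the suspension, which your plan never invokes.

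The paper's construction runs in the opposite (and correct) variance. Lemma~\ref{retraction-colim} produces, for each fixed $p$, a retraction of the \emph{whole} space, $\rho_p^m\colon X^{\underline m}\to X_p$: a map out of a colimit is given by a cocone, and the lower-semilattice structure supplies one via $X_q\xrightarrow{\rho_{p\wedge q,q}}X_{p\wedge q}\xrightarrow{\iota_{q,p\wedge q}}X_q$ packaged as a map of diagrams $\tau\colon X\to Y$ with $Y_q=X_{p\wedge q}$ and $Y^{\underline m}=X_p$. (This is where the meet is really used --- to handle $q$ not comparable to $p$ --- not your observation that $q\wedge q'<p$.) The candidate equivalence is then the single explicit map $\sum_{p\in P^{\le n}}\Sigma\pi_p\circ\rho_p^n$, a co-H sum landing in the wedge; its restriction to $\Sigma X^{\underline{n-1}}$ equals $\epsilon^{n-1}$ by the compatibility clause of Lemma~\ref{retraction-colim}, so $(\epsilon^{n-1})^{-1}\circ\sum_{p\in P^{\le n-1}}\Sigma\pi_p\circ\rho_p^n$ is the required left inverse, obtained by passing through the inductively known wedge decomposition rather than by mapping into $\mathrm{colim}\,\Sigma X\vert_{P_{<p}}$ directly. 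Your remaining steps (base case, cofibration bookkeeping, identification of the top quotient, naturality from retraction-preservation) are fine, but the heart of the argument needs to be reorganized along these lines.
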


\begin{remark}
\label{generalization}
We can generalize Theorem \ref{first-decomp} by weakening the condition to that there are maps $\bar{\rho}_{p,q}\colon\Sigma X_q\to\Sigma X(p)$ for any $q>p\in P$ such that the composite $\Sigma X_p\xrightarrow{\Sigma\iota_{q,p}}\Sigma X_q\xrightarrow{\bar{\rho}_{p,q}}\Sigma X(p)$ is the quotient map and $\bar{\rho}_{p,q}\circ\Sigma\iota_{q,r}=\bar{\rho}_{p,r}$ for $p<r<q\in P$, where $\iota_{p,q}$ is an arrow in $X$. Indeed, we can construct a quotient map $\bar{\rho}_p^m\colon X^{\underline{m}}\to X(p)$ for $p\in P^k$ with $k\le m$ satisfying a property analogous to Lemma \ref{retraction-colim}, so the proof of Theorem \ref{first-decomp} works for this situation.
\end{remark}

The rest of this section is devoted to the proof of this theorem, and we prepare two lemmas.

\begin{lemma}
\label{retraction-colim}
For $p\in P^k$ with $k\le m$, there is a retraction $\rho_p^m\colon X^{\underline{m}}\to X_p$ of the canonical map $X_p\to X^{\underline{m}}$ satisfying 
$$\rho_p^m\circ i=\rho_p^\ell$$
for $k\le\ell\le m$ and the canonical map $i\colon X^{\underline{\ell}}\to X^{\underline{m}}$.
\end{lemma}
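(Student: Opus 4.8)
The plan is to realise $\rho_p^m$ through the universal property of $X^{\underline m}=\mathrm{colim}\,X|_{P^{\le m}}$, by producing maps $f_q\colon X_q\to X_p$, one for each $q\in P^{\le m}$, compatible with the structure maps of $X$. Since $P$ is a lower semilattice, $p\wedge q$ is defined for every such $q$, and I would take
\[
f_q:=\iota_{p,\,p\wedge q}\circ\rho_{p\wedge q,\,q}\colon X_q\to X_{p\wedge q}\to X_p ,
\]
with the conventions $\iota_{p,p}=\mathrm{id}$ and $\rho_{q,q}=\mathrm{id}$; thus $f_q=\iota_{p,q}$ if $q\le p$, $f_q=\rho_{p,q}$ if $p\le q$, and in particular $f_p=\mathrm{id}_{X_p}$. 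Once $(f_q)_{q\in P^{\le m}}$ is known to be a cocone, the induced map $\rho_p^m\colon X^{\underline m}\to X_p$ retracts the canonical map $X_p\to X^{\underline m}$ because $f_p=\mathrm{id}$, and for $k\le\ell\le m$ both $\rho_p^m\circ i$ and $\rho_p^\ell$ are the maps induced by the single cocone $(f_q)_{q\in P^{\le\ell}}$, hence coincide. So the whole statement reduces to the cocone condition.

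To verify it I must show $f_{q'}\circ\iota_{q',q}=f_q$ whenever $q<q'$ in $P^{\le m}$. From $q\le q'$ one has $(p\wedge q')\wedge q=p\wedge q$, so $p\wedge q\le p\wedge q'$ and $\iota_{p,\,p\wedge q}=\iota_{p,\,p\wedge q'}\circ\iota_{p\wedge q',\,p\wedge q}$ by functoriality of $X$; hence it suffices to prove
\begin{equation}
\rho_{p\wedge q',\,q'}\circ\iota_{q',q}=\iota_{p\wedge q',\,p\wedge q}\circ\rho_{p\wedge q,\,q}.\tag{$\ast$}
\end{equation}
I would prove $(\ast)$ by induction on the length of the interval $[q,q']$, which is finite since $P$ is graded. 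If this length is at least $2$, choose $r$ with $q<r<q'$, write $\iota_{q',q}=\iota_{q',r}\circ\iota_{r,q}$, apply the inductive hypothesis first to $[r,q']$ and then to $[q,r]$, and compose using $\iota_{p\wedge q',\,p\wedge q}=\iota_{p\wedge q',\,p\wedge r}\circ\iota_{p\wedge r,\,p\wedge q}$. This reduces $(\ast)$ to the case that $q'$ covers $q$, where three possibilities occur. If $p\wedge q=p\wedge q'$, then $(\ast)$ is literally the retractile identity $\rho_{p\wedge q,\,q'}\circ\iota_{q',q}=\rho_{p\wedge q,\,q}$. If $q\le p\wedge q'$, then $p\wedge q=q$ and $(\ast)$ follows from $\iota_{q',q}=\iota_{q',\,p\wedge q'}\circ\iota_{p\wedge q',\,q}$ together with $\rho_{p\wedge q',\,q'}\circ\iota_{q',\,p\wedge q'}=\mathrm{id}$. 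The remaining possibility is that $q$ and $p\wedge q'$ are incomparable.

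I expect this last, incomparable, case to be the main obstacle. Applying the two retractile identities along the chains $p\wedge q<q<q'$ and $p\wedge q<p\wedge q'<q'$ only shows that the two sides of $(\ast)$ become equal after postcomposition with $\rho_{p\wedge q,\,p\wedge q'}$, which need not be injective, so the abstract retractile axioms do not settle this case by themselves. Here one has to bring in the $m$-cofibrancy of $X$: by Lemma \ref{cofibrant}, after arranging that the relevant arrows of $X|_{P^{\le m}}$ are cofibrations, $X^{\underline m}$ is obtained by successively attaching the spaces $X_{q'}$, $q'\in P^{\le m}$, along the canonical maps out of $\mathrm{colim}\,X|_{P_{<q'}}$, and $(\ast)$ is then verified by a direct computation with this subspace description, the covering relation $q\lessdot q'$ controlling how $\iota_{q',q}$, the retraction $\rho_{p\wedge q',q'}$, and the subspace $X_{p\wedge q}\subseteq X_{p\wedge q'}$ meet inside $X_{q'}$. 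Once $(\ast)$, hence the cocone condition, is established, $\rho_p^m$ is the induced map and it has the asserted properties.
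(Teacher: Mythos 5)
Your construction is the paper's construction in different packaging: the cocone $f_q=\iota_{p,p\wedge q}\circ\rho_{p\wedge q,q}$ is exactly what the paper encodes as the map of diagrams $\tau\colon X\to Y$ into the auxiliary space $Y$ over $P$ with $Y_q=X_{p\wedge q}$, followed by passing to colimits (where $Y^{\underline{m}}=X_p$ for $m\ge k$); the retraction property and the compatibility $\rho_p^m\circ i=\rho_p^\ell$ then come out the same way. Your identity $(\ast)$ is precisely the naturality of $\tau$ that the paper has to check, your reduction to covering relations is valid, and your first two cases of the trichotomy are handled correctly.

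The gap is the third case, where $q$ and $p\wedge q'$ are incomparable, and the repair you propose cannot work. The $m$-cofibrancy hypothesis (Lemma \ref{cofibrant}) constrains only the inclusions; it says nothing about the retractions, so no point-set analysis of the filtration can force $(\ast)$. Concretely, take $P=2^{[2]}$ graded by cardinality, $X_\emptyset=\ast$, $X_{\{1\}}=X_{\{2\}}=S^1$, $X_{[2]}=S^1\times S^1$ with the coordinate inclusions (all cofibrations), and set $\rho_{\{1\},[2]}(x,y)=xy$, $\rho_{\{2\},[2]}(x,y)=xy$, with the constant maps to $X_\emptyset$. The two displayed identities of the retractile definition only involve the chains $\emptyset<\{i\}<[2]$, along which everything is constant, so they hold (and persist after suspension); yet $\rho_{\{1\},[2]}\circ\iota_{[2],\{2\}}$ sends $y\mapsto y$ while $\iota_{\{1\},\emptyset}\circ\rho_{\emptyset,\{2\}}$ is constant, so $(\ast)$ fails for $p=\{1\}$, $q=\{2\}$, $q'=[2]$. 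Thus $(\ast)$ in the incomparable case is a genuinely additional compatibility between the retractions and the inclusions, not a consequence of the stated axioms plus cofibrancy. For comparison, the paper disposes of this very step in a single asserted equality, $\rho_{p\wedge r,r}\circ\iota_{r,q}=\iota_{p\wedge r,p\wedge q}\circ\rho_{p\wedge q,q}$, treating it as part of the retractile structure; it does hold in all the applications (coordinate projections for polyhedral products, the simplicial identities for simplicial spaces). So you have correctly isolated the one nontrivial point, but your argument is incomplete exactly there: the identity must either be taken as part of the hypothesis or verified directly in each application, and it cannot be derived from cofibrancy.
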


\begin{proof}
Let $\iota_{r,q}\colon X_q\to X_r$ be the arrow in $X$ for $q<r\in P$. Fix $p\in P^k$. Since $P$ is a lower semilattice, we can define a space $Y$ over $P$ by putting $Y_q=X_{p\wedge q}$ and the arrow $Y_q\to Y_r$ to be $\iota_{p\wedge r,p\wedge q}$.  Then the map $\theta_q:=\iota_{q,p\wedge q}\colon Y_q=X_{p\wedge q}\to X_q$ defines a map $\theta\colon Y\to X$ of spaces over $P$. Indeed for $q<r$, we have 
$$\theta_r\circ\iota_{p\wedge r,p\wedge q}=\iota_{r,p\wedge r}\circ\iota_{p\wedge r,p\wedge q}=\iota_{r,p\wedge q}=\iota_{r,q}\circ\iota_{q,p\wedge q}=\iota_{r,q}\circ\theta_q.$$
The map $\tau_q:=\rho_{p\wedge q,q}\colon X_q\to X_{p\wedge q}=Y_q$ also defines a map $\tau\colon X\to Y$ of spaces over $P$ since for $q<r$, we have
$$\tau_r\circ\iota_{r,q}=\rho_{p\wedge r,r}\circ\iota_{r,q}=\iota_{p\wedge r,p\wedge q}\circ\rho_{p\wedge q,q}=\iota_{p\wedge r,p\wedge q}\circ\tau_q.$$
By definition, we have $\tau\circ\theta=1_Y$ and $Y^{\underline{n}}=X_p$ for $n\ge k$. Thus the induced map $X^{\underline{m}}\to Y^{\underline{m}}=X_p$ from $\tau$ is the desired retraction.
\end{proof}

\begin{lemma}
[{\cite[Theorem 4.2]{HMR}}]
\label{split}
If there is a homotopy retraction $r$ of the suspension $\Sigma f$ of a cofibration $f\colon A\to B$, then the map
$$r+\Sigma\pi\colon \Sigma B\to\Sigma A\vee\Sigma(B/A)$$
is a homotopy equivalence, where $\pi\colon B\to B/A$ is the projection.
\end{lemma}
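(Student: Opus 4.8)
The plan is to realize $g:=r+\Sigma\pi$ as a morphism of cofibre sequences whose two outer maps are identities, and then to conclude by a Yoneda/five-lemma argument that needs no hypothesis on fundamental groups. First I would record the relevant cofibre sequence. Since $f$ is a cofibration, $\pi\colon B\to B/A$ is (up to homotopy) the cofibre of $f$, and the Barratt--Puppe construction gives a cofibre sequence
$$\Sigma A\xrightarrow{\Sigma f}\Sigma B\xrightarrow{\Sigma\pi}\Sigma(B/A)\xrightarrow{\partial}\Sigma^2A\to\cdots.$$
I interpret the map in the statement through the comultiplication $\nu\colon\Sigma B\to\Sigma B\vee\Sigma B$ of the suspension, namely $g=(r\vee\Sigma\pi)\circ\nu$, so that $g$ is exactly the co-$H$ sum $r+\Sigma\pi$.

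The key step is two homotopy-commutativity relations. Using naturality of $\nu$ with respect to the suspension map $\Sigma f$, together with $r\circ\Sigma f\simeq 1_{\Sigma A}$ and $\Sigma\pi\circ\Sigma f=\Sigma(\pi\circ f)\simeq\ast$, one computes
$$g\circ\Sigma f\simeq i_1\qquad\text{and}\qquad p_2\circ g\simeq\Sigma\pi,$$
where $i_1\colon\Sigma A\to\Sigma A\vee\Sigma(B/A)$ is the inclusion of the first summand and $p_2\colon\Sigma A\vee\Sigma(B/A)\to\Sigma(B/A)$ is the collapse of $\Sigma A$. Thus $g$ is a map from the displayed cofibre sequence to the split cofibre sequence $\Sigma A\xrightarrow{i_1}\Sigma A\vee\Sigma(B/A)\xrightarrow{p_2}\Sigma(B/A)$ which is the identity on the two outer terms. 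The existence of the retraction also forces $\partial$ to be null-homotopic (apply $[-,W]$ one stage further: $(\Sigma^2f)^\ast$ is split surjective, so $\partial^\ast=0$), matching the split bottom sequence.

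Finally I would apply the contravariant functor $[-,W]$ to both cofibre sequences. Barratt--Puppe exactness turns them into long exact ladders, and because every object occurring is at least a single suspension, all the terms are groups and all the maps are homomorphisms. The five lemma, with the four outer vertical maps being isomorphisms, shows that $g^\ast\colon[\Sigma A\vee\Sigma(B/A),W]\to[\Sigma B,W]$ is a bijection for every $W$. By the Yoneda lemma in the pointed homotopy category, $g$ is a homotopy equivalence.

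The main obstacle is the verification of the two relations $g\circ\Sigma f\simeq i_1$ and $p_2\circ g\simeq\Sigma\pi$: these are where the comultiplication, the retraction property $r\circ\Sigma f\simeq 1_{\Sigma A}$, and the cofibre relation $\pi\circ f\simeq\ast$ must be combined correctly. I prefer this route over the alternative of checking that $g$ induces an isomorphism on reduced homology with arbitrary coefficients (via the short exact sequence split by $r_\ast$) and then invoking a Whitehead theorem, since the $[-,W]$ argument requires no simple-connectivity or nilpotency assumption on $A$ and $B$.
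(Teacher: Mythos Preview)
The paper does not supply its own proof of this lemma; it simply quotes it as \cite[Theorem 4.2]{HMR}. So there is nothing in the paper to compare your argument against.

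That said, your proof is correct. The two identities $g\circ\Sigma f\simeq i_1$ and $p_2\circ g\simeq\Sigma\pi$ follow exactly as you indicate, and the nullity of $\partial$ is forced by composing with the retraction $\Sigma r$ of $\Sigma^2 f$. One small point worth making explicit: in the $[-,W]$ ladder the middle vertical map $g^\ast$ need not be a group homomorphism (since $r$ is not assumed to be a co-$H$-map), so the literal five lemma does not apply. However, the diagram chase still goes through because the \emph{horizontal} maps $(\Sigma f)^\ast$, $(\Sigma\pi)^\ast$, $i_1^\ast$, $p_2^\ast$, $\partial^\ast$ are all induced by suspensions and hence are homomorphisms, and one only needs left cancellation in the group $[\Sigma B,W]$; concretely, $g^\ast(\phi_1,\phi_2)=r^\ast(\phi_1)+(\Sigma\pi)^\ast(\phi_2)$, and surjectivity/injectivity of this assignment follow from the split short exact sequence $0\to[\Sigma(B/A),W]\to[\Sigma B,W]\to[\Sigma A,W]\to 0$ together with $\partial\simeq\ast$. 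Your remark that the Yoneda route avoids any simple-connectivity hypothesis (unlike the homology-plus-Whitehead alternative) is well taken and is precisely why the lemma holds in the generality stated.
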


\begin{proof}[Proof of Theorem \ref{first-decomp}]
We show that the map
$$\sum_{p\in P^{\le n}}\Sigma\pi_p\circ\rho^n_p\colon \Sigma X^{\underline{n}}\to\Sigma\bigvee_{p\in P^{\le n}}X(p)$$
is a homotopy equivalence which implies the desired naturality, where $\rho_p^n$ is as in Lemma \ref{retraction-colim} for $\Sigma X$ and $\pi_p\colon X_p\to X(p)$ is the projection. Let $\epsilon^n$ denote the map in the statement. We induct on $n$. For $n=0$, the theorem is trivial. Suppose that $\epsilon^{n-1}$ is a homotopy equivalence. Since the restriction $\epsilon^n\vert_{\Sigma X^{\underline{n-1}}}$ is homotopic to $\epsilon^{n-1}$ by Lemma \ref{retraction-colim}, the map 
$$(\epsilon^{n-1})^{-1}\circ\sum_{p\in P^{\le n-1}}\Sigma\pi_p\circ\rho^n_p\colon \Sigma X^{\underline{n}}\to\Sigma X^{\underline{n-1}}$$ 
is a left homotopy inverse of the canonical map $\Sigma X^{\underline{n-1}}\to\Sigma X^{\underline{n}}$. Then it follows from Lemma \ref{split} that the map
$$\bar{\pi}+\sum_{p\in P^{\le n-1}}\Sigma\pi_p\circ\rho^n_p\colon \Sigma X^{\underline{n}}\to\Sigma(X^{\underline{n}}/X^{\underline{n-1}})\vee\Sigma\bigvee_{p\in P^{\le n-1}}X(p)$$ 
is a homotopy equivalence, where $\bar{\pi}\colon \Sigma X^{\underline{n}}\to\Sigma(X^{\underline{n}}/X^{\underline{n-1}})$ is the projection. It is obvious that $\Sigma(X^{\underline{n}}/X^{\underline{n-1}})=\bigvee_{p\in P^n}X(p)$ and the projection $\bar{\pi}$ is homotopic to $\sum_{p\in P^n}\Sigma\pi_p\circ\rho^n_p$, completing the proof.
\end{proof}

\section{Applications of Theorem \ref{first-decomp}}

This section shows three applications of Theorem \ref{first-decomp} which recover the results of \cite{BBCG} and \cite{ABBCG}.

\subsection{Product spaces}

We consider the product space $X_1\times\cdots\times X_m$. Let $[m]$ denote a finite set $\{1,\ldots,m\}$. We define a space $X$ over a lattice $2^{[m]}$, the power set of $[m]$, by
$$X_I:=\prod_{i\in I}X_i$$
for $I\subset[m]$. Then it is obvious that $X$ is retractile. By definition, we have $X^{\underline{n}}$ is the generalized fat wedge
$$\{(x_1,\ldots,x_m)\in X_1\times\cdots\times X_m\,\vert\,\text{at least }m-n\text{ of }x_i\text{'s are basepoints}\}.$$
and $X(I)=\bigwedge_{i\in I}X_i$ for $I\subset[m]$. Then by Theorem \ref{first-decomp} we get the standard decomposition
$$\Sigma(X_1\times\cdots\times X_m)\simeq\Sigma\bigvee_{I\subset[m]}\bigwedge_{i\in I}X_i.$$
The case $m=2$ is the above mentioned decomposition of a product of two spaces. This decomposition of product spaces is generalized to that of polyhedral products as below.

\subsection{Polyhedral products}

Let $K$ be an abstract simplicial complex on the vertex set $[m]$, and let $(\underline{X},\underline{A}):=\{(X_i,A_i)\}_{i\in[m]}$ be a collection of pairs of pointed spaces indexed by the vertex set of $K$. The polyhedral product $\Z_K(\underline{X},\underline{A})$ is defined by
$$\Z_K(\underline{X},\underline{A}):=\bigcup_{\sigma\in K}(\underline{X},\underline{A})^\sigma\quad(\subset X_1\times\cdots\times X_m),$$
where $(\underline{X},\underline{A})^\sigma=Y_1\times\cdots\times Y_m$ for $Y_i=X_i$ and $A_i$ according as $i\in\sigma$ and $i\not\in\sigma$. Polyhedral products are connected with several areas of mathematics as mentioned in Section 1, and this connection is actually made through homotopy invariants in many cases. So it is particularly important to describe the homotopy types of polyhedral products. In studying the homotopy types of polyhedral products, the decomposition of suspensions of polyhedral products due to Bahri, Bendersky, Cohen, and Gitler \cite{BBCG} is fundamental as in \cite{GT,IK1,IK2}, and we here recover this decomposition from Theorem \ref{first-decomp}. For $I\subset[m]$, put $K_I:=\{\sigma\subset I\,\vert\,\sigma\in K\}$ and $(\underline{X}_I,\underline{A}_I):=\{(X_i,A_i)\}_{i\in I}$. Then we get a polyhedral product $\Z_{K_I}(\underline{X}_I,\underline{A}_I)$ for which there is the inclusion
$$\iota_{J,I}\colon\Z_{K_I}(\underline{X}_I,\underline{A}_I)\to\Z_{K_J}(\underline{X}_J,\underline{A}_J)$$
for $I\subset J\subset[m]$ by using the basepoints, where we assume $\Z_{K_\emptyset}(\underline{X}_\emptyset,\underline{A}_\emptyset)$ is a point. For $I\subset J\subset[m]$, the projection $\prod_{j\in J}X_j\to\prod_{i\in I}X_i$ induces a map 
$$\rho_{I,J}\colon\Z_{K_J}(\underline{X}_J,\underline{A}_J)\to\Z_{K_I}(\underline{X}_I,\underline{A}_I)$$
which is a retraction of the inclusion $\iota_{J,I}$. This retraction obviously satisfies the following property.

\begin{lemma}
\label{retractile-Z}
For $I,J\subset[m]$, we have
$$\rho_{I\cap J,I}\circ\rho_{I,I\cup J}=\rho_{I\cap J,J}\circ\rho_{J,I\cup J}.$$
\end{lemma}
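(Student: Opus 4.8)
The plan is to reduce the identity to the strict functoriality of coordinate projections of product spaces. First I would set up notation: for $I\subset J\subset[m]$, write $\pi_{I,J}\colon\prod_{j\in J}X_j\to\prod_{i\in I}X_i$ for the coordinate projection that forgets the factors indexed by $J\setminus I$. By construction $\rho_{I,J}$ is nothing but the restriction of $\pi_{I,J}$ to the polyhedral product $\Z_{K_J}(\underline{X}_J,\underline{A}_J)$; what makes this well defined is that for every $\sigma\in K_J$ one has $\sigma\cap I\in K_I$, so $\pi_{I,J}$ sends $(\underline{X}_J,\underline{A}_J)^\sigma$ into $(\underline{X}_I,\underline{A}_I)^{\sigma\cap I}\subset\Z_{K_I}(\underline{X}_I,\underline{A}_I)$, and hence sends the union $\Z_{K_J}(\underline{X}_J,\underline{A}_J)$ into $\Z_{K_I}(\underline{X}_I,\underline{A}_I)$.

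Next I would record the evident identity $\pi_{I,J}\circ\pi_{J,L}=\pi_{I,L}$ for $I\subset J\subset L\subset[m]$, valid because both sides merely keep the coordinates indexed by $I$. Taking $(I,J,L)=(I\cap J,\,I,\,I\cup J)$ yields $\pi_{I\cap J,I}\circ\pi_{I,I\cup J}=\pi_{I\cap J,I\cup J}$, and taking $(I,J,L)=(I\cap J,\,J,\,I\cup J)$ yields $\pi_{I\cap J,J}\circ\pi_{J,I\cup J}=\pi_{I\cap J,I\cup J}$.

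To conclude, both composites appearing in the lemma are obtained by restricting these equal maps to the common source $\Z_{K_{I\cup J}}(\underline{X}_{I\cup J},\underline{A}_{I\cup J})$ and corestricting to the common target $\Z_{K_{I\cap J}}(\underline{X}_{I\cap J},\underline{A}_{I\cap J})$, using the first paragraph to see that each intermediate map really does land in the polyhedral product it is supposed to, so the two composites agree. I do not anticipate any genuine obstacle here; the only care required is the index bookkeeping together with the well-definedness check of the first paragraph, and indeed the statement is essentially just the commutativity of the square of coordinate projections on $\prod_{k\in I\cup J}X_k$.
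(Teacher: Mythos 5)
Your proof is correct and is exactly the argument the paper has in mind: the paper defines $\rho_{I,J}$ as the restriction of the coordinate projection and then asserts the lemma as "obvious," which is precisely the transitivity $\pi_{I\cap J,I}\circ\pi_{I,I\cup J}=\pi_{I\cap J,I\cup J}=\pi_{I\cap J,J}\circ\pi_{J,I\cup J}$ together with the well-definedness check you spell out. No issues.
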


The assignment
$$I\mapsto\Z_{K_I}(\underline{X}_I,\underline{A}_I)$$
defines a space over a lattice $2^{[m]}$ which we denote by $Z$. We define the grading of $2^{[m]}$ by the cardinality of subsets. Then the associated filtration
$$*=Z^{\underline{0}}\subset Z^{\underline{1}}\subset\cdots\subset Z^{\underline{m}}=\Z_K(\underline{X},\underline{A})$$
is the fat wedge filtration which plays the fundamental role in describing the homotopy type of the special polyhedral product $\Z_K(C\underline{X},\underline{X})$ as in \cite{IK2}. We can define a space $\widehat{\Z}_K(\underline{X},\underline{A})$ by replacing the direct product with the smash product in the definition of the polyhedral product $\Z_K(\underline{X},\underline{A})$ above. Then for $I\subset[m]$, we have
$$Z(I)=\widehat{\Z}_{K_I}(\underline{X}_I,\underline{A}_I).$$
Note that by Lemma \ref{cofibrant}, if each $(X_i,A_i)$ is an NDR pair, then $Z$ is $m$-cofibrant. By Lemma \ref{retractile-Z}, $Z$ is also retractile, so by Theorem \ref{first-decomp} we obtain:

\begin{theorem}
[Bahri, Bendersky, Cohen, and Gitler \cite{BBCG}]
\label{BBCG}
If $(\underline{X},\underline{A})$ is a collection of NDR pairs, there is a homotopy equivalence 
$$\Sigma\Z_K(\underline{X},\underline{A})\simeq\Sigma\bigvee_{\emptyset\ne I\subset[m]}\widehat{\Z}_{K_I}(\underline{X}_I,\underline{A}_I)$$
which is natural with respect to $(\underline{X},\underline{A})$.
\end{theorem}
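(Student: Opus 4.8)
The plan is to verify that the space $Z$ over the lattice $2^{[m]}$, as constructed above, satisfies all the hypotheses of Theorem \ref{first-decomp} with $n=m$, and then simply read off the conclusion. First I would record that $2^{[m]}$, graded by cardinality, is a graded lower semilattice: for $I,J\subset[m]$ the greatest lower bound is $I\cap J$, and $I\subsetneq J$ forces $|I|<|J|$, so the grading condition holds. Next I would observe that the inclusions $\iota_{J,I}$ and their retractions $\rho_{I,J}$ make $Z$ a space over $2^{[m]}$: functoriality of $I\mapsto\Z_{K_I}(\underline{X}_I,\underline{A}_I)$ on inclusions is immediate, and the retractions are induced by coordinate projections, hence compatible.

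The core point is to check retractility of $Z$. The two identities required in the Definition are $\rho_{I,K}\circ\iota_{K,J}=\rho_{I,J}$ and $\rho_{I,K}=\rho_{I,J}\circ\rho_{J,K}$ for $I\subsetneq J\subsetneq K$; both are immediate from the fact that $\rho_{I,J}$ is the restriction to polyhedral products of the coordinate projection $\prod_{j\in J}X_j\to\prod_{i\in I}X_i$, and projections compose in the evident way while the inclusions use basepoints in the dropped coordinates. (Lemma \ref{retractile-Z} is the analogous compatibility statement in the form needed for Lemma \ref{retraction-colim}, so it is also in hand.) For the cofibrancy hypothesis: assuming each $(X_i,A_i)$ is an NDR pair, each inclusion $\iota_{J,I}$ is a cofibration — this is where the NDR assumption is used, via a standard argument that a polyhedral product built on NDR pairs includes into a larger one as a cofibration — and then Lemma \ref{cofibrant} gives that $Z$ is $m$-cofibrant, so a fortiori Theorem \ref{first-decomp} applies. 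The theorem requires $\Sigma X$ retractile, which follows by applying the suspension functor to the retractions and inclusions of $Z$, since $\Sigma$ preserves the relevant identities.

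With the hypotheses verified, Theorem \ref{first-decomp} with $n=m$ yields a natural homotopy equivalence $\Sigma Z^{\underline m}\simeq\Sigma\bigvee_{I\in 2^{[m]}}Z(I)$. It remains to identify the two sides with the statement. On the left, $Z^{\underline m}=\mathrm{colim}\,Z=\Z_K(\underline{X},\underline{A})$ since $[m]$ is the top element of the lattice and $\Z_K=\bigcup_{\sigma\in K}(\underline X,\underline A)^\sigma$ is exactly this colimit. On the right, $Z(\emptyset)$ is a point and so contributes nothing to the wedge, and for $I\ne\emptyset$ one has $Z(I)=Z_I/\mathrm{colim}\,Z|_{2^{[m]}_{<I}}=\widehat{\Z}_{K_I}(\underline X_I,\underline A_I)$: collapsing the sub--fat--wedge over the proper subsets of $I$ turns each Cartesian factor into a smash factor, which is precisely the definition of $\widehat{\Z}$. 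Naturality with respect to $(\underline X,\underline A)$ follows from naturality in Theorem \ref{first-decomp}, since a morphism of collections of pairs induces a map of spaces over $2^{[m]}$ that preserves the coordinate--projection retractions. The main obstacle in writing this out cleanly is the NDR-pair input needed to conclude that the maps $\iota_{J,I}$ are cofibrations (and hence to invoke Lemma \ref{cofibrant}); everything else is bookkeeping identifying colimits and quotients in the lattice $2^{[m]}$ with the named polyhedral products.
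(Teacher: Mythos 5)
Your proposal is correct and takes essentially the same route as the paper: both verify that $Z\colon I\mapsto\Z_{K_I}(\underline{X}_I,\underline{A}_I)$ is a retractile, $m$-cofibrant space over the graded lower semilattice $2^{[m]}$ (the NDR hypothesis feeding Lemma \ref{cofibrant}, the coordinate-projection retractions giving retractility as in Lemma \ref{retractile-Z}), and then apply Theorem \ref{first-decomp} after identifying $Z^{\underline{m}}$ with $\Z_K(\underline{X},\underline{A})$ and $Z(I)$ with $\widehat{\Z}_{K_I}(\underline{X}_I,\underline{A}_I)$.
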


\begin{example}
\label{DJ}
Let $(\underline{X},\underline{*})$ denote $n$-copies of a pair of a space and its basepoint $(X,*)$. Note that $\widehat{Z}_{K_I}(\underline{X},\underline{*})$ is either a point or $\widehat{X}^{|I|}$ according as $I\not\in K$ and $I\in K$, where $\widehat{X}^n$ denotes the smash product of $n$-copies of $X$. Then by Theorem \ref{BBCG} we have
$$\Sigma\Z_K(\underline{X},\underline{*})\simeq\Sigma\bigvee_{\sigma\in K}\widehat{X}^{|\sigma|}$$
which is natural with respect to $X$, where this will be used below.
\end{example}

\subsection{Simplicial spaces}

Recall that a simplicial space $X$ is a sequence of spaces $X_0,X_1,\ldots$ equipped with the face maps $d_0,\ldots,d_n\colon X_n\to X_{n-1}$ and the degeneracy maps $s_0,\ldots,s_n\colon X_n\to X_{n+1}$ for all $n$ which satisfy the well known simplicial identity. We construct a space $\underline{X}$ over a graded lattice $2^{[n]}$ for fixed $n$ from a simplicial space $X$, where the grading of the lattice $2^{[n]}$ is given by the cardinality of subsets as above. For $I\subset[n]$, we put
$$\underline{X}_I:=X_{|I|}.$$
For $i\not\in I$, we put $\iota_{I\cup i,I}\colon\underline{X}_I\to\underline{X}_{I\cup i}$ to be the degeneracy map $s_j$, where $I\cup i=\{i_1<\cdots<i_{|I|+1}\}$ and $i_{j-1}=i$. Then we easily see that this generates a space $\underline{X}$ over $2^{[n]}$. Moreover, by the simplicial identity $d_j$ is a retraction of $s_j$ which makes $\underline{X}$ retractile also by the simplicial identity. We next describe $\underline{X}^{\underline{m}}$ in terms of the degeneracy maps. We set
$$S^k(X_n):=\{x\in X_n\,\vert\,x=s_{i_1}\cdots s_{i_k}(y)\text{ for some }y\in X_{n-k}\text{ and }i_1>\cdots>i_k\}$$
for $k\ge 0$ and $S^{-1}(X_n)$ to be a point. By the simplicial identity $d_is_i=1$, the map $s_i\colon X_m\to s_i(X_m)$ is a homeomorphism, so we have 
$$\underline{X}^{\underline{n-k}}\cong S^k(X_n).$$
Then we get
$$S^k(X_n)/S^{k+1}(X_n)\cong\underline{X}^{\underline{n-k}}/\underline{X}^{\underline{n-k-1}}=\bigvee_{I\subset[n],\,|I|=n-k}\underline{X}(I)$$
which is observed in \cite{ABBCG}. Thus we obtain:

\begin{theorem}
\label{ABBCG}
Let $\underline{X}$ be a space over $2^{[n]}$ associated with a simplicial space $X$. If $\underline{X}$ is $n$-cofibrant, then
$$\Sigma\underline{X}^{\underline{n}}\simeq\Sigma\bigvee_{k=0}^nS^k(X_n)/S^{k+1}(X_n)\quad\text{and}\quad S^k(X_n)/S^{k+1}(X_n)\cong\bigvee_{I\subset[n],\,|I|=n-k}\underline{X}(I)$$
which are natural with respect to simplicial maps.
\end{theorem}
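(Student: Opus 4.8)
The strategy is to recognize Theorem \ref{ABBCG} as the instance of Theorem \ref{first-decomp} obtained by taking $P=2^{[n]}$ graded by cardinality and applying it to the space $\underline{X}$ constructed above; the remaining work is a reindexing of the resulting wedge together with the translation of its summands into the quotients $S^k(X_n)/S^{k+1}(X_n)$, for which the needed homeomorphisms have already been recorded.

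First I would confirm that $\underline{X}$ is a retractile space over the graded lower semilattice $2^{[n]}$, so that $\Sigma\underline{X}$ is retractile as well. Functoriality of $I\mapsto X_{|I|}$ with the degeneracy maps as arrows is the simplicial identity $s_as_b=s_{b+1}s_a$ for $a\le b$, once the position index $j$ is matched with the combinatorics of inserting a vertex into $I$; and the retraction conditions for the face maps $d_j$ follow from $d_js_j=\mathrm{id}$ together with $d_as_b=s_{b-1}d_a$ and $d_as_b=s_bd_{a-1}$ in their respective ranges. Applying $\Sigma$ to all structure maps and to these relations shows $\Sigma\underline{X}$ is retractile. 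Since $\underline{X}$ is $n$-cofibrant by hypothesis (which, by Lemma \ref{cofibrant}, would in particular hold if every degeneracy map $s_j\colon X_m\to X_{m+1}$ with $m<n$ were a cofibration), Theorem \ref{first-decomp} applies with $P^{\le n}=2^{[n]}$ (as $|[n]|=n$) and yields a homotopy equivalence
$$\Sigma\underline{X}^{\underline{n}}\simeq\Sigma\bigvee_{I\subset[n]}\underline{X}(I).$$

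Next I would split this wedge by cardinality, $\bigvee_{I\subset[n]}\underline{X}(I)=\bigvee_{k=0}^n\bigvee_{I\subset[n],\,|I|=n-k}\underline{X}(I)$, and invoke the homeomorphisms $\underline{X}^{\underline{n-k}}\cong S^k(X_n)$ established above (each $s_i$ being a homeomorphism onto its image by $d_is_i=\mathrm{id}$), which identify $\bigvee_{|I|=n-k}\underline{X}(I)=\underline{X}^{\underline{n-k}}/\underline{X}^{\underline{n-k-1}}$ with $S^k(X_n)/S^{k+1}(X_n)$; this is precisely the second assertion of the theorem, and substituting it into the equivalence above gives $\Sigma\underline{X}^{\underline{n}}\simeq\Sigma\bigvee_{k=0}^nS^k(X_n)/S^{k+1}(X_n)$. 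For naturality, a simplicial map $f\colon X\to Y$ commutes with every face and degeneracy map, hence induces a map $\underline{X}\to\underline{Y}$ of spaces over $2^{[n]}$ that preserves retractions (these being the face maps), so the equivalence of Theorem \ref{first-decomp} is natural; and the identifications $S^k(X_n)/S^{k+1}(X_n)\cong\bigvee_{|I|=n-k}\underline{X}(I)$, being built from degeneracy maps, are natural too.

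I do not expect a serious obstacle here: the only steps requiring care are the bookkeeping that converts ``position $j$ of the inserted vertex'' into the standard simplicial identities, and the verification that the homeomorphism $\underline{X}^{\underline{n-k}}\cong S^k(X_n)$ is compatible with the filtrations on the two sides so that it passes to the successive quotients.
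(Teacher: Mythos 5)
Your proposal is correct and follows essentially the same route as the paper: the paper's ``proof'' is precisely the discussion in Subsection 3.3 (retractility of $\underline{X}$ via the simplicial identities, the identification $\underline{X}^{\underline{n-k}}\cong S^k(X_n)$ using $d_is_i=1$, and the resulting identification of the filtration quotients), followed by an application of Theorem \ref{first-decomp} with $P=2^{[n]}$ graded by cardinality. You supply a bit more detail than the paper on which simplicial identities verify functoriality and the retraction compatibilities, but the argument is the same.
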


\begin{example}
Regard $[n]$ as a discrete space, and consider the standard cosimplicial structure on $\{[n]\}_{n\ge 1}$, where the indexing differs from the usual case by one. For a space $X$, we define a simplicial space $\underline{X}$ by
$$\underline{X}_{n-1}:=\mathrm{map}([n],X)\quad(=X^n).$$
Then we have $X^{\underline{n-1}}=X^n$ and $S^k(\underline{X}_{n-1})$ is
$$\{(x_1,\ldots,x_n)\in X^n\,\vert\,x_{i_1}=x_{i_1+1},\ldots,x_{i_k}=x_{i_{k+1}}\text{ for some }1\le i_1<\cdots<i_k\le n-1\}$$
which is the union of a special diagonal arrangement investigated below. Thus Theorem \ref{ABBCG} gives a decomposition of $\Sigma(X^n)$ which is not the standard one in Subsection 3.1. This type of construction applies to the spaces of commuting elements in a Lie group as in \cite{ABBCG}.
\end{example}

\section{Diagonal arrangements}

Homotopy decompositions are fundamental powerful tools in studying topology of subspace arrangements and their complements. Here are two examples: Ziegler and \u{Z}ivaljevi\'c \cite{ZZ} decompose the one point compactification of affine subspace arrangements, from which one can deduce the well known Goresky-MacPherson formula \cite{GM} on the (co)homology of the complements of affine subspace arrangements, and Bahri, Bendersky, Cohen, and Gitler \cite{BBCG} decompose suspensions of polyhedral products including coordinate subspace arrangements and their complements, from which one can deduce Hochster's formula on related Stanley-Reisner rings, whereas Grbi\'c and Theriault \cite{GT} and the authors \cite{IK1,IK2} study the desuspension of the decomposition of $\Sigma\Z_K(C\underline{X},\underline{X})$, where $(C\underline{X},\underline{X})$ is the sequence of cones and their bases. In this section we consider a decomposition of the union of an arrangement of the following special subspaces. Fix a space $X$. For a subset $\sigma\subset[m]$, the subspace of $X^m$ defined by
$$\Delta_\sigma(X):=\{(x_1,\ldots,x_m)\in X^m\,\vert\,x_{i_1}=\cdots=x_{i_k}\text{ for }\{i_1,\ldots,i_k\}=[m]-\sigma\}$$
is called the diagonal subspace of $X^m$ associated with $\sigma$. The arrangement of diagonal subspaces
$$\Delta_{\sigma_1}(X),\ldots,\Delta_{\sigma_k}(X)\quad\text{for}\quad\sigma_1,\ldots,\sigma_k\subset[m]$$
is called the diagonal arrangement, where it is sometimes called the hypergraph arrangement since it is determined by the hypergraph whose vertex set is $[m]$ and edges are $\sigma_1,\ldots,\sigma_k$. One can regard diagonal arrangements as a generalization of the braid arrangement which corresponds to the diagonal arrangement defined by all subsets of $[m]$ with cardinality $m-2$. Topology and combinatorics of diagonal arrangements have been studied in several directions. See \cite{Ko,PRW,Ki,KS,La,MW,M} for example.  In this paper, we are interested in the topology of the union $\Delta_{\sigma_1}(X)\cup\cdots\cup\Delta_{\sigma_k}(X)$.

We set convention and notation on diagonal arrangements. By removing the inessential part, we may assume that $\sigma_1\cup\cdots\cup\sigma_k=[m]$ for the above diagonal arrangement, and it is useful to consider all diagonal subspaces included in $\Delta_{\sigma_1}(X),\ldots,\Delta_{\sigma_k}(X)$, for example, to express the union as a colimit, that is, we consider all diagonal subspaces $\Delta_\sigma(X)$ for $\sigma\in K$, where $K$ is a simplicial complex generated by $\sigma_1,\ldots,\sigma_k$. Then we assume that all diagonal arrangements have the form
$$\{\Delta_\sigma(X)\,\vert\,\sigma\in K\}$$
for a simplicial complex $K$ on the vertex set $[m]$. For example, the braid arangement is the case when $K$ is the $(m-3)$-skeleton of the $(m-1)$-dimensional full simplex. We put
$$\Delta_K(X):=\bigcup_{\sigma\in K}\Delta_\sigma(X).$$
Observe that the polyhedral product $\Z_K(\underline{X},\underline{*})$ is a subspace of $\Delta_K(X)$, where $(\underline{X},\underline{*})$ denotes $m$-copies of $(X,*)$. 

Labassi \cite{La} shows that the suspension $\Sigma\Delta_K(X)$ decomposes into a wedge of smash products of copies of $X$ when $K$ is the $(m-d-1)$-skeleton of the $(m-1)$-simplex and $2d>m$, in which case $\Delta_K(X)$ consists of all $(x_1,\ldots,x_m)\in X^m$ such that at least $d$-tuple of $x_i$'s are identical. The proof for this decomposition in \cite{La} heavily depends on the symmetry of the skeleta of simplices, and so it cannot apply to general $K$. However, Sadok Kallel poses the following problem to the authors: is there a homotopy decomposition of $\Sigma\Delta_K(X)$ for $ 2(\dim K+1)<m$ which includes Labassi's decomposition? We give an affirmative answer to this question as:

\begin{theorem}
\label{second-decomp}
If $X$ has the homotopy type of a connected CW-complex and $2(\dim K+1)<m$, then
$$\Sigma\Delta_K(X)\simeq\Sigma(\bigvee_{\sigma\in K}\widehat{X}^{|\sigma|}\vee\widehat{X}^{|\sigma|+1})$$
where $\widehat{X}^k$ is the smash product of $k$-copies of $X$ for $k>0$ and $\widehat{X}^0$ is a point.
\end{theorem}

As a corollary, we  calculate the Euler characteristic of the complement of the diagonal arrangement $\mathcal{M}_K(X)=X^m-\Delta_K(X)$.

\begin{corollary}
\label{euler-char}
Let $X$ be a closed connected $n$-manifold. If $2(\dim K+1)<m$, the Euler characteristic of $\mathcal{M}_K(X)$ is given by
$$\chi(\mathcal{M}_K(X))=\chi(X)^m-(-1)^{mn}\chi(X)(1+\sum_{\emptyset\ne\sigma\in K}(\chi(X)-1)^{|\sigma|}).$$
\end{corollary}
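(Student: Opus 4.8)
The plan is to compute the Euler characteristic of $\mathcal{M}_K(X) = X^m - \Delta_K(X)$ via the additivity of the Euler characteristic (with compact supports) on the decomposition $X^m = \mathcal{M}_K(X) \sqcup \Delta_K(X)$, which gives $\chi(\mathcal{M}_K(X)) = \chi(X^m) - \chi(\Delta_K(X)) = \chi(X)^m - \chi(\Delta_K(X))$. Here one must be slightly careful about which Euler characteristic one uses: since $X$ is a closed connected $n$-manifold, $X$, $X^m$, $\Delta_K(X)$ and $\mathcal{M}_K(X)$ all have finitely generated (co)homology, and additivity $\chi_c(U) + \chi_c(Z) = \chi_c(X^m)$ holds for a closed subset $Z$ with open complement $U$; moreover $\chi_c$ agrees with the ordinary $\chi$ on compact spaces and on $\mathcal{M}_K(X)$ (an open manifold) one has $\chi_c(\mathcal{M}_K(X)) = (-1)^{m n}\chi(\mathcal{M}_K(X))$ by Poincar\'e duality, since $\mathcal{M}_K(X)$ is an open $mn$-manifold. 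So it suffices to compute $\chi(\Delta_K(X))$.

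Next I would compute $\chi(\Delta_K(X))$ using Theorem \ref{second-decomp}. Since $\Sigma\Delta_K(X) \simeq \Sigma\left(\bigvee_{\sigma\in K}\widehat{X}^{|\sigma|}\vee\widehat{X}^{|\sigma|+1}\right)$ and suspension shifts reduced homology by one degree without changing it otherwise, the reduced Euler characteristic satisfies $\tilde\chi(\Delta_K(X)) = \sum_{\sigma\in K}\left(\tilde\chi(\widehat{X}^{|\sigma|}) + \tilde\chi(\widehat{X}^{|\sigma|+1})\right)$, where for a wedge reduced Euler characteristics add. Using $\tilde\chi(\widehat{X}^k) = (\chi(X)-1)^k$ for $k > 0$ (from $\tilde H_*(\widehat X^k) \cong \tilde H_*(X)^{\otimes k}$ via the Künneth theorem, so $\tilde\chi(\widehat X^k) = \tilde\chi(X)^k = (\chi(X)-1)^k$) and $\tilde\chi(\widehat X^0) = \tilde\chi(\mathrm{pt}) = 0$, one gets
$$\tilde\chi(\Delta_K(X)) = \sum_{\sigma\in K}\left((\chi(X)-1)^{|\sigma|} + (\chi(X)-1)^{|\sigma|+1}\right),$$
with the convention that the $|\sigma|=0$ term (the empty simplex, if $K$ is nonempty) contributes $0 + (\chi(X)-1)$. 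Since $\Delta_K(X)$ is connected (it contains the small diagonal, and $X$ is connected), $\chi(\Delta_K(X)) = 1 + \tilde\chi(\Delta_K(X))$. Rearranging, and separating the $\emptyset \in K$ term, one finds $\chi(\Delta_K(X)) = (\chi(X)-1)\left(1 + \sum_{\emptyset\ne\sigma\in K}(\chi(X)-1)^{|\sigma|}\right) + \left(1 + \sum_{\emptyset\ne\sigma\in K}(\chi(X)-1)^{|\sigma|}\right)$; one should check the bookkeeping carefully so that the two geometric-type sums combine to exactly $\chi(X)\left(1 + \sum_{\emptyset\ne\sigma\in K}(\chi(X)-1)^{|\sigma|}\right)$.

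Finally I would assemble the pieces: $(-1)^{mn}\chi(\mathcal{M}_K(X)) = \chi_c(\mathcal{M}_K(X)) = \chi(X)^m - \chi(\Delta_K(X)) = \chi(X)^m - \chi(X)\left(1 + \sum_{\emptyset\ne\sigma\in K}(\chi(X)-1)^{|\sigma|}\right)$, and multiply through by $(-1)^{mn}$ — noting $(-1)^{2mn}=1$ — to obtain the stated formula. The main obstacle I anticipate is not the homological computation but the careful justification of the additivity/Poincar\'e-duality step: one needs $\mathcal{M}_K(X)$ to be an open submanifold of the closed manifold $X^m$ (clear, since $\Delta_K(X)$ is closed) with finite-dimensional total (co)homology so that $\chi$ and $\chi_c$ are defined and related by $\chi_c = (-1)^{mn}\chi$, and one must confirm that $\Delta_K(X)$, being a finite union of closed submanifolds (each $\Delta_\sigma(X)\cong X^{|\sigma|+1}$, a closed manifold), is a compact space with $\chi_c = \chi$. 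A secondary subtlety is the handling of the empty simplex in $K$ and the single $(\chi(X)-1)^0 = 1$ term, which is exactly what produces the leading ``$1$'' inside the parenthesis in the final formula; tracking this through the wedge decomposition and the passage from reduced to unreduced Euler characteristic is where sign and off-by-one errors are most likely to creep in.
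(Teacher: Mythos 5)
Your proposal is correct and follows essentially the same route as the paper: both reduce the problem to computing $\chi(\Delta_K(X))$ from Theorem \ref{second-decomp} via $\chi(\widehat{X}^k)=(\chi(X)-1)^k+1$, and both use a duality argument to convert that into a statement about $\mathcal{M}_K(X)$. The only packaging difference is in the duality step: the paper applies Poincar\'e--Alexander duality to the pair, $H_i(X^m,\mathcal{M}_K(X);\mathbb{Z}/2)\cong H^{mn-i}(\Delta_K(X);\mathbb{Z}/2)$, and then uses the long exact sequence of the pair, whereas you use additivity of $\chi_c$ plus Poincar\'e duality on the open $mn$-manifold $\mathcal{M}_K(X)$; these are equivalent moves, though the paper's version has the small advantage of only needing duality for the compact, locally contractible subset $\Delta_K(X)$ (justified by citing \cite[Proposition 3.46]{H}) rather than finite-dimensionality of $H^*_c(\mathcal{M}_K(X))$. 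One loose end in your final step: multiplying $(-1)^{mn}\chi(\mathcal{M}_K(X))=\chi(X)^m-\chi(\Delta_K(X))$ by $(-1)^{mn}$ yields a leading term $(-1)^{mn}\chi(X)^m$, not $\chi(X)^m$, and your remark that $(-1)^{2mn}=1$ does not address this; the two agree only because $\chi(X)=0$ whenever $n$ is odd ($X$ being a closed odd-dimensional manifold), which should be said explicitly. The paper's arrangement of the identity $\chi(X^m)=\chi(X^m,\mathcal{M}_K(X))+\chi(\mathcal{M}_K(X))$ avoids this issue entirely.
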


\begin{proof}
Since $X$ is a compact manifold, $\Delta_K(X)$ is a compact, locally contractible subset of an $mn$-manifold $X^m$. Then by the Poincar\'e-Alexander duality \cite[Proposition 3.46]{H}, there is an isomorphism 
$$H_i(X^m,\mathcal{M}_K(X);\mathbb{Z}/2)\cong H^{mn-i}(\Delta_K(X);\mathbb{Z}/2),$$
implying that $\chi(X^m,\mathcal{M}_K(X))=(-1)^{mn}\chi(\Delta_K(X))$.
Thus since $\chi(\widehat{X}^k)=(\chi(X)-1)^k+1$ for $k\ge 1$, it follows from Theorem \ref{second-decomp} that
$$\chi(X^m,\mathcal{M}_K(X))=(-1)^{mn}\chi(X)(1+\sum_{\emptyset\ne\sigma\in K}(\chi(X)-1)^{|\sigma|}).$$
Therefore the proof is completed by the equality $\chi(X^m)=\chi(X^m,\mathcal{M}_K(X))+\chi(\mathcal{M}_K(X))$.
\end{proof}

\begin{remark}
Corollary \ref{euler-char} does not hold without compactness of $X$. For example, if $X=\R$ (hence $n=1$) and $K$ is the $(m-3)$-skeleton of the full $(m-1)$-simplex, $\mathcal{M}_K(X)$ is homotopy equivalent to $m!$ points, implying $\chi(\mathcal{M}_K(X))=m!$ which differs from Corollary \ref{euler-char}.
\end{remark}

\section{Proof of Theorem \ref{second-decomp}}

The outline of the proof of Theorem \ref{second-decomp} is as follows. As mentioned above, the polyhedral product $\Z_K(\underline{X},\underline{*})$ is a subspace of $\Delta_K(X)$. In general the inclusion $\Z_K(\underline{X},\underline{*})\to\Delta_K(X)$ is not a fiber inclusion of a homotopy fibration unlike our case so that we cannot connect properties of polyhedral products to $\Delta_K(X)$. But under the condition $2(\dim K+1)<m$, we can describe the inclusion to some extent, which enables us to apply the decomposition of polyhedral product in Example \ref{DJ} to obtain Theorem \ref{second-decomp}.

We abbreviate $\Z_K(\underline{X},\underline{*})$ by $X^K$. We start the proof of Theorem \ref{second-decomp} by showing that the inclusion $X^K\to\Delta_K(X)$ is the fiber inclusion of a homotopy fibration. For this, we apply the following result of Puppe.

\begin{lemma}
[cf. {\cite[Proposition, pp.180]{F}}]
\label{hocolim-fibration}
Let $\{F_i\to E_i\to B\}_{i\in I}$ be an $I$-diagram of homotopy fibrations over a fixed connected base $B$. Then 
$$\underset{I}{\mathrm{hocolim}}\,F_i\to\underset{I}{\mathrm{hocolim}}\,E_i\to B$$ 
is a homotopy fibration.
\end{lemma}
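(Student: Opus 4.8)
The plan is to identify the homotopy fiber of $p\colon \mathrm{hocolim}_I E_i\to B$ over a chosen basepoint $b\in B$ with $\mathrm{hocolim}_I F_i$; since $B$ is connected the choice of $b$ is immaterial, so this identification is exactly the assertion that the displayed sequence is a homotopy fibration. First I would replace $\{E_i\to B\}_{i\in I}$ by a levelwise-equivalent $I$-diagram in which every $E_i\to B$ is an honest fibration and every structure map is a map over $B$; then $F_i$ is the strict fiber over $b$, and any coproduct $\coprod_\alpha E_{i_\alpha}\to B$ is again a fibration, with fiber $\coprod_\alpha F_{i_\alpha}$.

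Next I would use the bar-construction model $\mathrm{hocolim}_I E_i=|E^{\bullet}|$, where $E^{n}=\coprod_{i_0\to\cdots\to i_n}E_{i_0}$ is a proper (Reedy cofibrant) simplicial space carrying an augmentation to the constant simplicial space $c_B$, while the analogous $F^{\bullet}$ computes $\mathrm{hocolim}_I F_i$. By the previous step each $E^{n}\to B$ is a fibration with fiber $F^{n}$, so the claim reduces to the following statement: if $Y^{\bullet}\to c_B$ is a map from a proper simplicial space with each $Y^{n}\to B$ a fibration of fiber $\Phi^{n}$, then $\mathrm{hofib}_b(|Y^{\bullet}|\to B)\simeq|\Phi^{\bullet}|$. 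I would prove this by induction on skeleta. The passage from $\mathrm{sk}_{n-1}|Y^{\bullet}|$ to $\mathrm{sk}_{n}|Y^{\bullet}|$ is a pushout along a cofibration performed entirely over $B$; putting this square as the bottom of a cube whose top face is the square of homotopy fibers over $b$, the four side faces are homotopy pullbacks (homotopy pullbacks compose, so the fiber of a composite is computed fiberwise), whence Mather's cube theorem forces the top face to be a homotopy pushout. That is precisely the corresponding skeletal step for $|\Phi^{\bullet}|$, and together with the base case $\mathrm{sk}_0|Y^{\bullet}|=Y^{0}$ of fiber $\Phi^{0}$ this identifies $\mathrm{hofib}_b(\mathrm{sk}_n|Y^{\bullet}|\to B)\simeq\mathrm{sk}_n|\Phi^{\bullet}|$ for every $n$.

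It then remains to pass to the colimit over $n$: writing $|Y^{\bullet}|=\mathrm{colim}_n Z_n$ for the skeletal telescope, the fiber sequences $\mathrm{sk}_n|\Phi^{\bullet}|\to Z_n\to B$ give long exact sequences which remain exact under the (filtered, hence exact) colimit, because homotopy groups commute with sequential colimits; comparing them shows $|\Phi^{\bullet}|\to|Y^{\bullet}|\to B$ is a homotopy fibration, with connectivity of $B$ used to fix the basepoint and to make ``the'' fiber well defined. The genuinely delicate point --- and the entire content of the lemma --- is that homotopy colimit and homotopy fiber do not commute for a general family of fibrations; what rescues the argument is precisely that the base $B$ is \emph{constant} over $I$. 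Cube-theoretically this is what makes the side faces homotopy pullbacks; in more structural language it is the fact that pullback along the fixed map $\{b\}\to B$ preserves colimits of spaces, so that one could alternatively read off the lemma from the universality of colimits in the homotopy category of spaces. The rest is the routine cofibrancy bookkeeping that lets the bar construction and its skeletal filtration compute the homotopy colimit.
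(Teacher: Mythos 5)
The paper offers no proof of this lemma: it is quoted (as ``a result of Puppe'') from \cite{F}, so there is nothing internal to compare your argument against. Your proposal is a correct reconstruction of the standard proof of Puppe's theorem, and it follows essentially the same route as the classical arguments: functorial fibrant replacement over $B$, the simplicial replacement (bar construction) model of the homotopy colimit, the observation that a coproduct of fibrations over a fixed base is a fibration with the coproduct of the fibers as fiber, skeletal induction using Mather's second cube theorem (equivalently, the fact that homotopy pushouts are preserved by pullback along $\{b\}\to B$), and a comparison of long exact sequences in the sequential colimit. The points you leave implicit are genuinely routine but worth naming: (i) in the skeletal step you must also identify the homotopy fiber over $b$ of the third corner of the attaching square (the union of the latching piece with $\partial\Delta^n\times Y^n$), which follows from the coproduct and product cases plus one more application of the cube theorem, and it is the splitness of the simplicial replacement that guarantees the latching maps are cofibrations so that the attaching squares are homotopy pushouts; (ii) the passage to the colimit requires the skeletal inclusions of both total spaces and fibers to be closed cofibrations so that $\pi_*$ commutes with the sequential colimit, together with the usual low-degree care ($\pi_1$-actions, $\pi_0$) in the five-lemma comparison of the exact sequences. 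Your closing remark correctly isolates the actual content of the lemma: it is the constancy of the base over $I$ that makes every side face of the relevant cubes a homotopy pullback; for a diagram with varying base the conclusion fails.
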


\begin{proposition}
\label{fibration}
If $X$ is connected and $2(\dim K+1)<m$, then there is a homotopy fibration
$$X^K\to\Delta_K(X)\xrightarrow{\pi}X.$$
\end{proposition}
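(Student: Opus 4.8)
The plan is to realize $\Delta_K(X)$ as a homotopy colimit of diagrams of homotopy fibrations over $X$ and then invoke Lemma \ref{hocolim-fibration}. First I would fix the map $\pi\colon\Delta_K(X)\to X$: for a point $(x_1,\dots,x_m)\in\Delta_\sigma(X)$ all coordinates outside $\sigma$ agree, and since $\sigma_1\cup\dots\cup\sigma_k=[m]$ and $2(\dim K+1)<m$ each $\sigma\in K$ is a proper subset of $[m]$, so $[m]-\sigma\ne\emptyset$; I would set $\pi(x_1,\dots,x_m)$ to be that common value of the coordinates indexed by $[m]-\sigma$. One checks this is well defined and continuous on the union, because on an overlap $\Delta_\sigma(X)\cap\Delta_\tau(X)$ the sets $[m]-\sigma$ and $[m]-\tau$ both meet the (larger) set of coordinates forced to be equal, using again that $|\sigma|,|\tau|$ are small enough that $[m]-(\sigma\cup\tau)\ne\emptyset$ — this is exactly where the dimensional hypothesis enters at the level of defining $\pi$.

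Next I would identify the fiber of $\pi$ over the basepoint $\ast\in X$ and, more importantly, exhibit $\pi$ restricted to each $\Delta_\sigma(X)$ as a (trivial) fibration over $X$. Concretely, $\Delta_\sigma(X)\cong X^{|\sigma|}\times X$, where the last factor records the common value on $[m]-\sigma$ and the map to $X$ is projection onto that last factor; hence $\Delta_\sigma(X)\to X$ is a homotopy fibration with fiber $X^{|\sigma|}$. For a face $\tau\subset\sigma$ the inclusion $\Delta_\tau(X)\hookrightarrow\Delta_\sigma(X)$ is, under these identifications, compatible with the projections to $X$ (the diagonal $X^{|\tau|}\to X^{|\sigma|}$ repeating coordinates on $\sigma-\tau$, crossed with the identity on the last factor), so we get a genuine $K$-diagram $\{\Delta_\sigma(X)\to X\}_{\sigma\in K}$ of homotopy fibrations over the fixed connected base $X$, with fibers the diagram $\{X^{|\sigma|}\}_{\sigma\in K}$ whose maps are exactly the coordinate diagonals. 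Now Lemma \ref{hocolim-fibration} gives a homotopy fibration $\operatorname{hocolim}_\sigma X^{|\sigma|}\to\operatorname{hocolim}_\sigma\Delta_\sigma(X)\to X$.

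It remains to compare these homotopy colimits with the strict ones: I must show the natural map $\operatorname{hocolim}_{\sigma\in K}\Delta_\sigma(X)\to\bigcup_{\sigma\in K}\Delta_\sigma(X)=\Delta_K(X)$ is a homotopy equivalence, and likewise that $\operatorname{hocolim}_{\sigma\in K}X^{|\sigma|}\simeq X^K$. The latter is the standard presentation of the polyhedral product $\Z_K(\underline{X},\underline{\ast})$ as a homotopy colimit over the face category of $K$ (valid since $X$ has the homotopy type of a CW complex, so the diagonal inclusions are cofibrations up to homotopy); the former is the analogous statement for the $\Delta_\sigma(X)$, which holds provided the diagram is cofibrant, i.e. the inclusions $\Delta_\tau(X)\hookrightarrow\Delta_\sigma(X)$ are cofibrations — true because under $\Delta_\sigma(X)\cong X^{|\sigma|}\times X$ they are diagonal-type inclusions of CW pairs. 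Matching the two projections to $X$ and the two fiber inclusions then yields the asserted homotopy fibration $X^K\to\Delta_K(X)\xrightarrow{\pi}X$, natural in $X$.

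The main obstacle is the well-definedness and fibration structure of $\pi$ away from a single $\Delta_\sigma(X)$: one needs the coherence of the "common value" map across all overlaps simultaneously, and this is precisely where $2(\dim K+1)<m$ is used, since it guarantees $[m]-(\sigma\cup\tau)\ne\emptyset$ for all $\sigma,\tau\in K$ so that any two diagonal subspaces in the arrangement are forced to agree on a nonempty common block of coordinates; without this the target "base" would not be a single copy of $X$ and Lemma \ref{hocolim-fibration} would not apply. The homotopy-colimit comparisons, by contrast, are routine given the CW hypothesis.
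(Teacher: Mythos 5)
Your overall strategy is the same as the paper's: define $\pi$ on each $\Delta_\sigma(X)$ as the common value of the coordinates outside $\sigma$ (consistent on overlaps because $|\sigma|<m/2$ for every $\sigma\in K$), observe that $\Delta_\sigma(X)\to X$ is a trivial fibration via $\Delta_\sigma(X)\cong X^{|\sigma|}\times X$, feed the resulting diagram of fibrations over the fixed connected base $X$ into Lemma \ref{hocolim-fibration}, and then replace homotopy colimits by colimits using that the inclusions are cofibrations. All of that matches the paper and is fine.

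The one step you get wrong is the identification of the fiber diagram, and as written it undercuts the identification of the homotopy fiber with $X^K$. Under the homeomorphisms $\Delta_\sigma(X)\cong X^{|\sigma|}\times X$, the inclusion $\Delta_\tau(X)\hookrightarrow\Delta_\sigma(X)$ for $\tau\subset\sigma$ is \emph{not} of the form $f\times\mathrm{id}_X$: it sends $((x_i)_{i\in\tau},x)$ to $((y_i)_{i\in\sigma},x)$ with $y_i=x_i$ for $i\in\tau$ and $y_i=x$ for $i\in\sigma-\tau$, so the base coordinate is copied into the newly freed slots. (It is still a map over $X$, which is all that Puppe's lemma requires.) Consequently the induced maps on the fibers over the basepoint fill the slots of $\sigma-\tau$ with $\ast$, i.e. they are the basepoint inclusions $(\underline{X},\underline{*})^\tau\hookrightarrow(\underline{X},\underline{*})^\sigma$ of the polyhedral product diagram --- not ``coordinate diagonals.'' This is not a cosmetic point: the colimit of $\{X^{|\sigma|}\}_{\sigma\in K}$ with diagonal maps is in general \emph{not} $\Z_K(\underline{X},\underline{*})$ (already for $K=\partial\Delta^2$ it glues three copies of $X^2$ along diagonal copies of $X$ rather than along coordinate axes), so the ``standard presentation of the polyhedral product as a homotopy colimit'' that you invoke does not apply to the diagram as you describe it. Once the fiber maps are corrected to basepoint inclusions, the colimit of the fiber diagram is $X^K$ on the nose and your argument closes exactly as in the paper.
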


\begin{proof}
Let $\sigma$ be a subset of $[m]$ satisfying $|\sigma|<\frac{m}{2}$. For each point $(x_1,\ldots,x_m)\in\Delta_\sigma(X)$, there is unique $x\in X$ such that more than $\frac{m}{2}$ of $x_i$'s are equal to $x$. Then by assigning such a point $x$ to $(x_1,\ldots,x_m)\in\Delta_\sigma(X)$, we get a map
$$\Delta_\sigma(X)\to X$$
which is identified with the coordinate projection through a homeomorphism $\Delta_\sigma(X)\cong X^{|\sigma|+1}$. Hence this map is a fibration with fiber $(\underline{X},\underline{*})^\sigma$, and yields a diagram of fibrations $\{(\underline{X},\underline{*})^\sigma\to\Delta_\sigma(X)\to X\}_{\sigma\in K}$. So by Lemma \ref{hocolim-fibration} we obtain a homotopy fibration
$$\underset{\sigma\in K}{\mathrm{hocolim}}\,(\underline{X},\underline{*})^\sigma\to\underset{\sigma\in K}{\mathrm{hocolim}}\,\Delta_\sigma(X)\to X.$$ 
For any $\tau\subset\nu\subset[m]$, the inclusions $(\underline{X},\underline{*})^\tau\to(\underline{X},\underline{*})^\nu$ and $\Delta_\tau(X)\to\Delta_\nu(X)$ are cofibrations, implying that there are natural homotopy equivalences

$$\underset{K}{\mathrm{hocolim}}\,(\underline{X},\underline{*})^\sigma\simeq\underset{K}{\mathrm{colim}}\,(\underline{X},\underline{*})^\sigma=X^K\quad\text{and}\quad\underset{K}{\mathrm{hocolim}}\,\Delta_\sigma(X)\simeq\underset{K}{\mathrm{colim}}\,\Delta_\sigma(X)=\Delta_K(X),$$
completing the proof.
\end{proof}

We next show that the fibration of Proposition \ref{fibration} splits after a suspension. To this end, we use the following.

\begin{lemma}
\label{decomp}
Consider a homotopy fibration $F\xrightarrow{j}E\xrightarrow{\pi}B$ of spaces having the homotopy types of connected CW-complexes. If $\Sigma j:\Sigma F\to\Sigma E$ has a homotopy retraction, then
$$\Sigma E\simeq\Sigma B\vee\Sigma F\vee\Sigma(B\wedge F).$$
\end{lemma}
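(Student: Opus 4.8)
The plan is to combine the given homotopy retraction with a splitting principle for the fibration, and then feed the result into the standard co-H-space argument that identifies the "extra" wedge summand. First I would use the homotopy retraction $r\colon\Sigma E\to\Sigma F$ of $\Sigma j$ to split off $\Sigma F$: by Lemma \ref{split} applied to the fibration's fiber inclusion $j$ — which, since $F$ and $E$ have the homotopy types of connected CW-complexes, may be replaced up to homotopy equivalence by a cofibration — the map $r+\Sigma\pi'\colon\Sigma E\to\Sigma F\vee\Sigma(E/F)$ is a homotopy equivalence, where $\pi'$ is the cofiber projection and $E/F$ is the homotopy cofiber of $j$. So it remains to identify $\Sigma(E/F)$ with $\Sigma B\vee\Sigma(B\wedge F)$.

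The key input for identifying the cofiber is the classical fact that for a homotopy fibration $F\to E\xrightarrow{\pi}B$ over a connected base, the homotopy cofiber of the fiber inclusion is, after one suspension, the \emph{half-smash} $B\ltimes F=(B\times F)/(\{*\}\times F)$ — more precisely, there is a homotopy equivalence $E/F\simeq B\ltimes F$ whenever the fibration admits a section, and in general one has $\Sigma(E/F)\simeq\Sigma(B\ltimes F)$, which follows e.g. from the James–Ganea analysis of the fibration or from the fact that $\Sigma\pi$ together with the map collapsing a suitable filtration realizes the equivalence. Granting this, I would then invoke the standard splitting of the half-smash: $\Sigma(B\ltimes F)\simeq\Sigma B\vee\Sigma(B\wedge F)$, which is exactly the $m=2$ case of the product decomposition recalled in Subsection 3.1 applied to the cofiber sequence $F\to B\times F\to B\ltimes F$ (the retraction onto $F$ being the obvious one). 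Concatenating, $\Sigma E\simeq\Sigma F\vee\Sigma(E/F)\simeq\Sigma F\vee\Sigma B\vee\Sigma(B\wedge F)$, as claimed.

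I expect the main obstacle to be making the identification $\Sigma(E/F)\simeq\Sigma(B\ltimes F)$ fully rigorous in the stated generality, i.e. for an arbitrary homotopy fibration over a connected CW base without assuming a section. The clean route is: replace $\pi$ by a fibration and $E$ by the mapping path space, so $F$ is the honest fiber; then the Ganea fiber–cofiber construction gives a map $\Sigma F\to\Sigma(\Omega B)\wedge\cdots$, but more directly one uses that the cofiber of $F\hookrightarrow E$ receives a map to $B\ltimes F$ which is a homology isomorphism (by a Serre spectral sequence / Blakers–Massey comparison with the sectioned case pulled back along the path fibration), hence a homotopy equivalence after suspension since all spaces in sight are simply connected up to the connectivity hypotheses — and in any case the suspension kills the fundamental-group subtleties. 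Once that homological comparison is in place, everything else is formal manipulation of cofiber sequences and the elementary half-smash splitting.
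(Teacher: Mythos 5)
Your first step is fine: given the homotopy retraction $r$ of $\Sigma j$, Lemma \ref{split} yields $\Sigma E\simeq\Sigma F\vee\Sigma C_j$ for the homotopy cofiber $C_j$ of $j$, and the half-smash splitting $\Sigma(B\ltimes F)\simeq\Sigma B\vee\Sigma(B\wedge F)$ is standard. The gap is the middle step: the ``classical fact'' that $\Sigma C_j\simeq\Sigma(B\ltimes F)$ for an arbitrary homotopy fibration over a connected base is false. Take $B=S^1\times S^2$ and let $\pi\colon E\to B$ be the product of the identity of $S^1$ with the Hopf map $S^3\to S^2$, so $E=S^1\times S^3$ and $F=S^1$ is a Hopf fibre. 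The fibre inclusion is null-homotopic (it factors through the simply connected $S^3$), so $C_j\simeq(S^1\times S^3)\vee S^2$, whose total reduced Betti number is $4$, whereas $B\ltimes F=(S^1\times S^2)\ltimes S^1$ has total reduced Betti number $6$; no amount of suspending identifies them. (In the relative Serre spectral sequence for $(E,F)\to(B,\ast)$ the differential $d^2\colon H_3(B,\ast)\to H_1(B,\ast)\otimes H_1(F)$, cap product with the Euler class, is an isomorphism here.) In this example $\Sigma j$ of course admits no retraction, so the Lemma is not contradicted; but it shows that the identification $\Sigma C_j\simeq\Sigma(B\ltimes F)$ genuinely requires the retraction hypothesis, at which point it is essentially equivalent to the statement being proved rather than an off-the-shelf input.

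Your fallback sketch runs into the same obstruction: to compare $C_j$ with $B\ltimes F$ you need a map from $E$ towards $F$, and none exists before suspending---the only available one is $r$ itself. This is exactly how the paper proceeds: it defines $\rho\colon\Sigma E\to\Sigma B\vee\Sigma F\vee\Sigma(B\wedge F)$ directly from $\Sigma\pi$, $r$, and $(\pi\wedge r)$ precomposed with the reduced diagonal, and proves $\rho$ is a homology isomorphism by induction over the skeleta of $B$: the fibration is trivial over each open cell, the trivial case is checked by hand using that $r$ restricts to a retraction on each fibre, and the pieces are assembled by excision and the five lemma before invoking the Whitehead theorem. That skeletal induction (in effect the spectral-sequence comparison you allude to, but carried out for the specific map built from $r$) is the real content of the lemma, and it is precisely the part your proposal leaves unproved.
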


\begin{remark}
If we assume further that $F$ is of finite type in Lemma \ref{decomp}, it immediately follows from the Leray-Hirsch theorem that the map $\rho$ in the proof of Lemma \ref{decomp} is an isomorphism in cohomology with any field coefficient, implying that $\rho$ is an isomorphism in the integral homology by \cite[Corollary 3A.7]{H}.
\end{remark}

\begin{proof}
Let $r:\Sigma E\to\Sigma F$ be a homotopy retraction of $\Sigma j$, and let $\rho$ be the composite
$$\Sigma E\to\Sigma E\vee\Sigma E\vee\Sigma E\xrightarrow{\Sigma\pi\vee r\vee\Delta}\Sigma B\vee\Sigma F\vee\Sigma(E\wedge E)\xrightarrow{1\vee 1\vee(\pi\wedge r)}\Sigma B^\vee$$
where $A^\vee=A\vee F\vee(A\wedge F)$ for a space $A$. Since $\Sigma E$ and $\Sigma B\vee\Sigma F\vee\Sigma(B\wedge F)$ have the homotopy types of simply connected CW-complexes, it is sufficient to show that $\rho$ is an isomorphism in homology by the J.H.C. Whitehead theorem. We first observe the special case when there is a fiberwise homotopy equivalence $\theta:B\times F\to E$ over $B$. Then it is straightforward to see 
$$\rho_*\circ\theta_*(b\times f)=b\times\hat{\theta}_*(f)+\sum_{|b_i|<|b|}b_i\times f_i$$
for singular chains $b,b_i$ in $B$ and $f,f_i$ in $F$, where we omit writing the suspension isomorphism of homology and $\hat{\theta}$ is a self-homotopy equivalence of $F$ given by the composite
$$\Sigma F\xrightarrow{j}\Sigma(B\times F)\xrightarrow{\theta}\Sigma E\xrightarrow{r}\Sigma F.$$
This readily implies that the map $\rho\circ\theta$ is an isomorphism in homology, and then so is $\rho$. For non-connected $B$, the above is also true if we assume that $r$ is a homotopy retraction of the suspension of the fiber inclusion on each component of $B$. We next consider the general case. Let $B_n$ be the $n$-skeleton of $B$, and let $E_n=\pi^{-1}(B_n)$. We prove that the restriction $\rho\vert_{\Sigma E_n}:\Sigma E_n\to\Sigma B_n^\vee$ is an isomorphism in homology by induction on $n$. Since $B$ is connected, $j$ is homotopic to the composite
$$F\simeq\pi^{-1}(b)\xrightarrow{\text{incl}}E$$
for any $b\in B$. Then $\rho\vert_{\Sigma E_0}:\Sigma E_0\to\Sigma B_0^\vee$ is an isomorphism in homology. Consider the following commutative diagram of homology exact sequences.
\begin{equation}
\label{rho}
\xymatrix{
\cdots\ar[r]&H_*(E_{n-1})\ar[r]\ar[d]^{(\rho\vert_{\Sigma E_{n-1}})_*}&H_*(E_n)\ar[r]\ar[d]^{(\rho\vert_{\Sigma E_n})_*}&H_*(E_n,E_{n-1})\ar[r]\ar[d]^{(\rho\vert_{\Sigma E_n})_*}&\cdots\\
\cdots\ar[r]&H_*(\check{B}_{n-1})\ar[r]&H_*(\check{B}_n)\ar[r]&H_*(\check{B}_n,\check{B}_{n-1})\ar[r]&\cdots}
\end{equation}
By the induction hypothesis, $(\rho\vert_{\Sigma E_{n-1}})_*$ is an isomorphism. Since $B_{n-1}$ is a subcomplex of $B_n$, there is a neighborhood $U\subset B_n$ of $B_{n-1}$ which deforms onto $B_{n-1}$. Then there is a commutative diagram
\begin{equation}
\label{rho-U}
\xymatrix{H_*(E_n,E_{n-1})\ar[r]^(.48)\cong\ar[d]^{(\rho\vert_{\Sigma E_n})_*}&H_*(E_n,\pi^{-1}(U))\ar[d]^{(\rho\vert_{\Sigma E_n})_*}\\
H_*(B_n^\vee,B_{n-1}^\vee)\ar[r]^(.54)\cong&H_*(B_n^\vee,U^\vee)}
\end{equation}
where the basepoint is taken in $B_{n-1}$. On the other hand, by the excision isomorphism, there is a commutative diagram
\begin{equation}
\label{rho-excision}
\xymatrix{H_*(E_n,\pi^{-1}(U))\ar[d]^{(\rho\vert_{\Sigma E_n})_*}&H_*(E_n-E_{n-1},\pi^{-1}(U)-E_{n-1})\ar[d]^{(\rho\vert_{\Sigma(E_n-E_{n-1})})_*}\ar[l]_(.63)\cong\\
H_*(B_n^\vee,U^\vee)&H_*((B_n-B_{n-1})^\vee,(U-B_{n-1})^\vee)\ar[l]^(.63)\cong}
\end{equation}
where the basepoint is taken in $U-B_{n-1}$ and is connected by a path to the formerly chosen basepoint in $B_{n-1}$. Since each connected component of $B_n-B_{n-1}$ is contractible, $E_n-E_{n-1}$ is fiberwise homotopy equivalent to $(B_n-B_{n-1})\times F$ over $B_n-B_{n-1}$, and then so is also $\pi^{-1}(U)-E_{n-1}$ to $(U-B_{n-1})\times F$ over $U-B_{n-1}$. As in the 0-skeleton case, we see that $r$ restricts to a homotopy retraction of the suspension of the fiber inclusion on each component of $B_n-B_{n-1}$. Then by the above trivial fibration case, we obtain that the map $(\rho\vert_{\Sigma(E_n-E_{n-1})})_*$ in \eqref{rho-excision} is an isomorphism. Although the basepoints used in \eqref{rho-U} and \eqref{rho-excision} are distinct, they are connected by a path. In particular, we can juxtapose \eqref{rho-U} and \eqref{rho-excision} to obtain that the right $(\rho\vert_{\Sigma E_n})_*$ in \eqref{rho} is an isomorphism. Thus by the five lemma, the middle $(\rho\vert_{\Sigma E_n})_*$ in \eqref{rho} is an isomorphism. We finally take the colimit to get that the map $\rho$ is an isomorphism in homology as desired, completing the proof.
\end{proof}

To apply Lemma \ref{decomp} to the fibration of Proposition \ref{fibration}, we construct a homotopy retraction of a suspension of the fiber inclusion $j\colon X^K\to\Delta_K(X)$. We first consider a special case.

\begin{proposition}
\label{H-space-trivial}
If $X$ is an H-space having the homotopy type of a CW-complex and $2(\dim K+1)<m$, then the fibration of Proposition \ref{fibration} is trivial.
\end{proposition}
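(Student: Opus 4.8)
The plan is to trivialize the fibration directly, by translating fibres with the H-space multiplication. First I would reduce to the case where the basepoint $*$ is a strict two-sided unit of the multiplication $\mu\colon X\times X\to X$: since $X$ is a CW complex one may replace $\mu$ by a homotopic multiplication for which $\mu(x,*)=\mu(*,x)=x$, and this changes neither the map $\pi$ nor the assertion, because the homotopy fibre of $\pi$ over any point is $X^K$ by Proposition~\ref{fibration}. I would also record the standard fact that for a connected H-space each left translation $L_x\colon X\to X$, $y\mapsto\mu(x,y)$, is freely homotopic to $\mathrm{id}_X$ via $(t,y)\mapsto\mu(x_t,y)$ for a path $x_t$ from $*$ to $x$, hence is a homotopy equivalence with $L_x(*)=x$.

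Next I would introduce the translation map
$$\Phi\colon X\times X^K\to\Delta_K(X),\qquad\Phi(x,(y_1,\dots,y_m))=(\mu(x,y_1),\dots,\mu(x,y_m)).$$
If $(y_1,\dots,y_m)\in(\underline{X},\underline{*})^\sigma$ for some $\sigma\in K$, then $\mu(x,y_i)=\mu(x,*)=x$ for every $i\notin\sigma$, so $\Phi(x,(y_i))\in\Delta_\sigma(X)\subset\Delta_K(X)$. Since the hypothesis $2(\dim K+1)<m$ gives $m-|\sigma|>m/2$, the coordinates indexed by $[m]-\sigma$ form a strict majority and are all equal to $x$, so $\pi\circ\Phi=\mathrm{pr}_1$; thus $\Phi$ is a map over $X$.

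I would then verify that $\Phi$ is a fibrewise homotopy equivalence. Over a point $x\in X$ the restriction $\Phi_x\colon X^K\to\pi^{-1}(x)$ is precisely the map of polyhedral products $\Z_K(\underline{X},\underline{*})\to\Z_K(\underline{X},\underline{x})$ induced coordinatewise by $L_x\colon(X,*)\to(X,x)$; as $L_x$ is a homotopy equivalence of NDR pairs and $(X,*),(X,x)$ are NDR pairs, this induced map is a homotopy equivalence (in particular $\Phi_*$ is the identity of $X^K$, and $X$ is connected). Because Proposition~\ref{fibration} only provides a homotopy fibration, to conclude formally I would first replace $\Delta_K(X)$ by an honest fibration $E\to X$ with $\Delta_K(X)\simeq_X E$ and fibre $X^K$, and then apply Dold's theorem on fibrewise homotopy equivalences to the induced map $X\times X^K\to E$ over the CW base $X$. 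This yields a fibre homotopy equivalence $\Delta_K(X)\simeq_X X\times X^K$, i.e. the fibration is trivial.

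The construction of $\Phi$ and the fibrewise check are routine once the dimension hypothesis is used to make translated basepoints a strict majority of the coordinates; the only points needing care are the reduction to a strictly unital multiplication and the passage from the homotopy fibration of Proposition~\ref{fibration} to an actual fibration before invoking the fibrewise homotopy equivalence criterion, which is where I expect the main technical friction.
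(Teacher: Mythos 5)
Your construction is correct and is essentially the paper's own: the same translation map $\varphi\colon X\times X^K\to\Delta_K(X)$, $(x,(y_1,\dots,y_m))\mapsto(xy_1,\dots,xy_m)$, viewed as a map over $X$ via the dimension hypothesis. The paper closes the argument more cheaply, avoiding the strict-unit reduction, the fibre-by-fibre check and Dold's theorem: it just compares the homotopy exact sequences of the two homotopy fibrations (five lemma) to see that $\varphi$ is a weak equivalence, and then applies the J.H.C.\ Whitehead theorem.
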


\begin{proof}
Consider the map
$$\varphi\colon X\times X^K\to\Delta_K(X),\quad(x,(x_1,\ldots,x_m))\mapsto(xx_1,\ldots,xx_m).$$
Then $\varphi$ satisfies a homotopy commutative diagram
$$\xymatrix{X^K\ar[r]\ar@{=}[d]&X\times X^K\ar[r]\ar[d]^\varphi&X\ar@{=}[d]\\
X^K\ar[r]&\Delta_K(X)\ar[r]&X}$$
in which two rows are homotopy fibrations. Then it follows from the homotopy exact sequence that $\varphi$ is a weak homotopy equivalence, hence a homotopy equivalence by the J.H.C. Whitehead theorem.
\end{proof}

We set notation. Put $\widehat{X}^K=\bigvee_{\sigma\in K}\widehat{X}^{|\sigma|}$, and let $\epsilon\colon\Sigma X^K\xrightarrow{\simeq}\Sigma\widehat{X}^K$ denote the homotopy equivalence of Example \ref{DJ}. Then a map $f\colon X\to Y$ induces maps $f^K\colon X^K\to Y^K$ and $\hat{f}^K\colon\widehat{X}^K\to\widehat{Y}^K$ which satisfy a commutative diagram
$$\xymatrix{\Sigma X^K\ar[r]^{\epsilon}\ar[d]^{\Sigma f^K}&\Sigma\widehat{X}^K\ar[d]^{\Sigma\hat{f}^K}\\
\Sigma Y^K\ar[r]^{\epsilon}&\Sigma\widehat{Y}^K.}$$

\begin{proposition}
\label{retraction}
If $X$ has the homotopy type of a connected CW-complex and $2(\dim K+1)<m$, then the inclusion $j\colon X^K\to\Delta_K(X)$ has a homotopy retraction after a suspension.
\end{proposition}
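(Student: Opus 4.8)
The plan is to reduce the assertion to the $H$-space case of Proposition \ref{H-space-trivial} by enlarging $X$ to an $H$-space off whose suspension $\Sigma X$ splits as a retract. Replacing $X$ by a connected CW model, put $M=J(X)$, the James construction on $X$. Then $M$ is a topological monoid, hence an $H$-space, it has the homotopy type of a connected CW-complex, and the inclusion $i\colon X\to M$ admits a left homotopy inverse after a single suspension: there is a map $q\colon\Sigma M\to\Sigma X$ with $q\circ\Sigma i\simeq\mathrm{id}_{\Sigma X}$, for instance the projection onto the first wedge summand of the James splitting $\Sigma M\simeq\bigvee_{n\ge1}\Sigma\widehat{X}^n$.

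Naturality of the constructions $X\mapsto X^K$ and $X\mapsto\Delta_K(X)$ yields a commutative square
$$\xymatrix{X^K\ar[r]^{j}\ar[d]_{i^K}&\Delta_K(X)\ar[d]^{\Delta_K(i)}\\ M^K\ar[r]^{j_M}&\Delta_K(M)}$$
in which $j$ and $j_M$ are the fiber inclusions of Proposition \ref{fibration} for $X$ and for $M$. Since $M$ is a connected $H$-space with CW homotopy type and $2(\dim K+1)<m$, Proposition \ref{H-space-trivial} makes the bottom fibration trivial, so $j_M$ has a homotopy retraction $r_M\colon\Delta_K(M)\to M^K$. Consequently, as soon as we produce a left homotopy inverse $s\colon\Sigma M^K\to\Sigma X^K$ of $\Sigma i^K$, the composite
$$s\circ\Sigma r_M\circ\Sigma\Delta_K(i)\colon\Sigma\Delta_K(X)\to\Sigma X^K$$
is a homotopy retraction of $\Sigma j$, because $\Sigma\Delta_K(i)\circ\Sigma j=\Sigma j_M\circ\Sigma i^K$ and $\Sigma r_M\circ\Sigma j_M\simeq\mathrm{id}$.

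It remains to split $\Sigma i^K$. By the naturality of the equivalence $\epsilon$ of Example \ref{DJ} (the commutative square displayed just before this proposition), $\Sigma i^K$ is, under $\epsilon$, homotopy equivalent to
$$\Sigma\hat{i}^K=\bigvee_{\sigma\in K}\Sigma i^{\wedge|\sigma|}\colon\bigvee_{\sigma\in K}\Sigma\widehat{X}^{|\sigma|}\to\bigvee_{\sigma\in K}\Sigma\widehat{M}^{|\sigma|},$$
so it suffices to split this wedge of maps, and since a wedge of maps each admitting a left homotopy inverse again admits one, it is enough to split $\Sigma i^{\wedge k}\colon\Sigma\widehat{X}^k\to\Sigma\widehat{M}^k$ for every $k=|\sigma|$. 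For this I would factor $i^{\wedge k}$ as the composite of the $k$ maps $\mathrm{id}_{\widehat{X}^{j-1}}\wedge i\wedge\mathrm{id}_{\widehat{M}^{k-j}}$ for $j=k,k-1,\ldots,1$; after one suspension, sliding the suspension coordinate next to the middle factor identifies each of these with a map of the form $\mathrm{id}\wedge\Sigma i\wedge\mathrm{id}$, which has $\mathrm{id}\wedge q\wedge\mathrm{id}$ as a left homotopy inverse by the first paragraph. Composing these left inverses in the reverse order splits $\Sigma i^{\wedge k}$, and transporting the resulting splitting back along $\epsilon$ gives the desired $s$.

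The only nonformal inputs are the James splitting producing $q$ and the $H$-space case of Proposition \ref{H-space-trivial}; granting these, the proof is a diagram chase. The part that needs care is the bookkeeping in the last step: one must keep the canonical homeomorphisms $\Sigma(A\wedge B)\cong A\wedge\Sigma B$ explicit so that the single suspension coordinate is shuffled between the smash factors coherently, and then verify that the composite of the one-factor retractions is genuinely a left homotopy inverse of $\Sigma i^{\wedge k}$.
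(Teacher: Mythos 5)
Your proof is correct and follows essentially the same route as the paper: the paper embeds $X$ into the H-space $\Omega\Sigma X$ via the suspension map $E$ (whose suspension splits), applies Proposition \ref{H-space-trivial} there, and transports the splitting of $\Sigma E$ through $\epsilon$ to the smash powers to split $\Sigma E^K$ — exactly your argument with $J(X)$ in place of its homotopy-equivalent model $\Omega\Sigma X$. The substitution is harmless, and your final paragraph just spells out the step the paper summarizes as ``since $\Sigma E$ has a retraction, so does $\Sigma\widehat{E}^K$.''
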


\begin{proof}
Let $E:X\to\Omega\Sigma X$ be the suspension map. Since $\Sigma E$ has a retraction, we easily see that the induced map $\Sigma\widehat{E}^K:\Sigma\widehat{X}^K\to\Sigma\widehat{\Omega\Sigma X}{}^K$ has a retraction, say $r$. Then we get a homotopy commutative diagram
$$\xymatrix{\Sigma\widehat{X}^K\ar@{=}[r]\ar@{=}[dd]&\Sigma\widehat{X}^K\ar[r]^{\epsilon^{-1}}\ar[dd]^{\Sigma\widehat{E}^K}&\Sigma X^K\ar[r]^(.45){\Sigma j}\ar[d]^{\Sigma E^K}&\Sigma\Delta_K(X)\ar[d]^{\Sigma\Delta_K(E)}\\
&&\Sigma(\Omega\Sigma X)^K\ar[r]^(.45){\Sigma j}\ar@{=}[d]&\Sigma\Delta_K(\Omega\Sigma X)\ar@{=}[d]\\
\Sigma\widehat{X}^K&\Sigma\widehat{\Omega\Sigma X}{}^K\ar[l]_r&\Sigma(\Omega\Sigma X)^K\ar[l]_\epsilon&\Sigma\Delta_K(\Omega\Sigma X)\ar[l]_{\Sigma r'}}$$
where $\Delta_K(E):\Delta_K(X)\to\Delta_K(\Omega\Sigma X)$ is induced from $E$ and $r'$ is obtained by Proposition \ref{H-space-trivial}. Thus the composite
$$\Sigma\Delta_K(X)\xrightarrow{\Sigma\Delta_K(E)}\Sigma\Delta_K(\Omega\Sigma X)\xrightarrow{\Sigma r'}\Sigma(\Omega\Sigma X)^K\xrightarrow{\epsilon}\Sigma\widehat{\Omega\Sigma X}{}^K\xrightarrow{r}\Sigma\widehat{X}^K\xrightarrow{\epsilon^{-1}}\Sigma X^K$$
is the desired homotopy retraction.
\end{proof}

\begin{proof}
[Proof of Theorem \ref{second-decomp}]
Combine Example \ref{DJ}, Proposition \ref{fibration}, Lemma \ref{decomp} and Proposition \ref{retraction}.
\end{proof}

\end{document}